\theoremstyle{plain} 
\newtheorem{theorem}{Theorem}[section] %
\newtheorem{lemma}[theorem]{Lemma}
\newtheorem{proposition}[theorem]{Proposition}
\theoremstyle{definition} %
\newtheorem{remark}[theorem]{Remark}
\newcommand{\R}{\mathbb{R} }
\newcommand{\LB}{\left \lbrack }
\newcommand{\RB}{\right  \rbrack }
\newcommand{\LC}{\left ( }
\newcommand{\RC}{\right ) }
\newcommand{\LD}{\left \{ }
\newcommand{\RD}{\right \} }
\newcommand{\DS}{\displaystyle }
\DeclareMathOperator{\esssup}{ess\sup}
\DeclareMathOperator{\diag}{diag}
\DeclareMathOperator{\id}{id}
\DeclareMathOperator*{\argmax}{argmax} 
\DeclareMathOperator*{\argmin}{argmin}
\DeclareMathOperator*{\conv}{conv}
\DeclareMathOperator*{\Proj}{Proj}
\def\address#1#2{\begingroup
\noindent\parbox[t]{7.8cm}{%
\small{\scshape\ignorespaces#1}\par\vskip1ex
\noindent\small{\itshape E-mail address}%
\/: #2\par\vskip4ex}\hfill%
\endgroup}%
\title{\uppercase{Nonlinear Evolution Equation Associated with Hypergraph Laplacian}} %
\author{Masahiro Ikeda, 
Shun Uchida\footnote{Corresponding author: shunuchida@oita-u.ac.jp}
\date{} %
}
\begin{document}

\maketitle

\footnote{ 
\textit{2020 Mathematics Subject Classification:}
Primary 34G25; Secondary 05C65, 26D15, 47J30.
}
\footnote{ 
\textit{Keywords:} Hypergraph, $p$-Laplacian, subdifferential, 
nonlinear evolution equation, ordinary differential equation, Poincar\'{e}-Wirtinger's inequality.
}
%

\begin{abstract}
Let 
$V$ be a finite set, $E \subset 2^{V} $ be a set of hyperedges, and $w : E \to (0, \infty)$ be an edge weight.
On the (wighted) hypergraph  $G = (V ,E ,w )$,
we can define 
a multivalued nonlinear operator $L_{G,p}$ ($p \in [1 ,\infty )$) 
as the subdifferential of a convex function on $\R ^V $, 
which is called ``hypergraph $p$-Laplacian.''
In this article,
we first introduce an inequality for this operator $L_{G,p}$
which resembles the Poincar\'{e}-Wirtinger inequality in PDEs.
Next we consider an ordinary differential equation on $\R ^V $
governed by $L_{G,p}$, which
is referred as ``heat'' equation on the hypergraph and used to study the
geometric structure of graph in recent researches.
With the aid of the Poincar\'{e}-Wirtinger type inequality,
we can discuss the existence and 
the large time behavior of solutions to the ODE
by procedures similar to those for 
the standard heat equation in PDEs with the zero Neumann boundary condition.
\end{abstract}

\section{Introduction}
Weighted hypergraph $G $ is the triplet of 
a finite set $V $ (vertex set),
a family $E \subset 2^{V} $ of subsets with more than one element of $V$ (set of hyperedges),
and a function  $w : E \to (0, \infty)$ (edge weight).
In this paper, 
we consider the so-called hypergraph $p$-Laplacian $L _{G , p }:\R^V \to 2^{\R^V}$ defined on
 $G=(V, E ,w)$ with $p\in [1,\infty)$ (see \S \ref{defLap} for the precise definition) and 
the following ordinary differential equation associated with $L _{G , p }$:
\begin{equation}
 \frac{d}{dt} x (t) + L_{G,p }(x (t) ) \ni h (t)    ,
\label{Eq} 
\end{equation}
where $x : [0,T] \to \R ^V $ is an unknown function and
$h : [0,T] \to \R ^V $ is a given external force with $T>0$.
This equation is referred as a ``heat'' equation on the hypergraph.

When $G$ is a usual graph,
namely, if $E $ consists of subsets  with two elements of $V$, then $L _{G , 2 }$
 becomes single-valued and coincides with a 
square matrix $D -A $,
where
$D$ and $A$ stand for the degree matrix and the adjacency matrix of $G$, respectively (see Remark \ref{Usual-Garph} below).
Then the random walk on the graph
can be characterized by $ L _{G ,2 } D ^{-1} = I - A D ^{-1}$,
called the random walk normalized Laplacian matrix on  $G$.
In this case,
the weight function $w $ in the Laplacian
can be regarded as the conductance (reciprocal of resistance)
of ``current'' or ``flow'' over each edge $e \in E$.
Such a matrix can be used to investigate some properties of network represented by a graph.
In particular, one of the important applications is PageRank, 
an algorithm to determine the importance of a Web site
introduced  by Brin--Page \cite{B-P}.
Moreover, the linear differential equation $x'(t) =  (I - A D ^{-1}) x (t)$
can be found in the definition of another pagerank, called the heat kernel pagerank given by  Chung \cite{Chung}.
Such a ``heat'' equation is also applied in \cite{Chung} to show a Cheeger type inequality,  a relation between the first positive eigenvalue of the normalized Laplacian $ L _{G ,2 } D ^{-1}$
and the geometric structure of graph.
 See also \cite{Chung-book} and references therein for more information of Cheeger inequalities on graphs.

As for the case where $G$ is a hypergraph 
(i.e., the family $E$ possibly possesses some sets with more than two elements), 
we can find various applications 
to, e.g.,  neural network \cite{Neu} or molecular modeling in chemistry \cite{Mole}.
In order to study geometric properties in the sense of Cheeger's inequality,
Louis \cite{Louis} introduced a generalized Laplacian 
and consider a heat equation defined on hypergraph (see also \cite{Jost01}\cite{Jost02}).
A variant definition is given by Yoshida \cite{Yoshida00},
which comprises subdifferentials of Lov\'{a}sz extension of submodular transformation. 
This type of Laplacian on hypergraph, which is the main target of this paper,
is often used in recent researches, e.g, \cite{FSY}\cite{IKT}\cite{L-H-M}\cite{TMIY}
and generalized to $p$-Laplacian in \cite{1-Lap} and \cite{L-M}. 
Remark that the hypergraph $p$-Laplacian $L_{G , p}$ defined in this manner
becomes a multivalued operator.

The main purpose of this paper is to 
state some basic tools and usages of the hypergraph $p$-Laplacian $L _{G, p}$ introduced by Yoshida \cite{Yoshida00} 
 from a viewpoint of the nonlinear evolution equation theory.
In the next section, we give a precise definition of $L _{G, p}$
and check that  $L _{G, p}$ can be written as a subdifferential of a functional $\varphi _{G, p}$ (see Proposition \ref{Maximal-monotone}).
Although this fact has already been pointed out in several articles,
we shall give a proof for self-containment.
We also show that $L _{G, p}$ satisfies a Poincar\'{e}-Wirtinger type inequality
and the structure of $L _{G, p}$ is quite similar to the standard Neumann Laplacian in PDE.
By using these facts, we deal with the Cauchy problem and the time-periodic problem of \eqref{Eq} in Section 3.
Since one can see that $L _{G, p}$ coincides with the subdifferential of a convex functional,
the K\={o}mura--Br\'{e}zis theory is applicable and
the existence of a unique global solution  to Cauchy problem of  \eqref{Eq} can be assured.
From  Poincar\'{e}'s inequality,
we shall derive a decay estimate of solution to \eqref{Eq} with $h \equiv 0$.
On the other hand,
we can not apply the abstract theory for the  time-periodic problem
since the coercivity of  $\varphi _{G, p}$ does not hold (see Theorem \ref{0-eigen-Rev} below).
We here  employ a technique for parabolic equations governed by the homogeneous Neumann Laplacian (see, e.g., \cite{O-U})
and assure the existence of periodic solution to \eqref{Eq}.

\section{Properties of Hypergraph $p$-Laplacian}
\subsection{Preliminary}
We first fix some
terms and definitions of maximal monotone operator and subdifferential operator
(see, e.g., \cite{Bar}\cite{Bre}\cite{Show}).
Let $H$ be a real Hilbert space with the norm $|\cdot |  $ and the inner product $(\cdot ,\cdot ) $
and $A$ be a (possibly) multivalued operator from $H$ into $2 ^H$, which stands for the power set of $H$.
The domain and range of $A$ are denoted by $D(A)$ and  $R(A)$, respectively.
An operator $A$ is said to be monotone if
$ (y _1 - y_2 ,  x_1  - x_2 ) \geq 0$ holds for any $x _j \in D(A)$ and $y_j \in A x_j $ ($j=1,2$)
and a monotone operator $A$ is said to be maximal monotone if
$ R(\id + A) = H$, where $\id $ is the identity map.
It is well known that
\begin{itemize}
\item If $A $ is maximal monotone, then $A x $ becomes a closed convex subset in $H$ for any $x \in D(A)$.  
Based on this fact, we define the minimal section of $A $
by $A ^{\circ } x := (Ax )^{\circ }$, where $C ^{\circ } := \argmin _{y \in C } |y | = \Proj _ {C} 0 $
for a closed convex set $C \subset H$.
 
\item The maximal monotone operator is demiclosed.
That is,
if $y _m \in A x_m$, $x _ m \to x $ strongly in $H$, and $y _ m \rightharpoonup  y $ weakly in $H$ as $m\to \infty$,
then the limits satisfy $x \in D(A)$  and $y \in A x$.
\end{itemize}

Let $\varphi : H \to ( -\infty , + \infty ]$ be a proper (i.e., $\varphi \not \equiv + \infty $) 
lower semi-continuous (l.s.c., for short) and convex functional.
The set $D(\varphi ) := \{ x \in H ;~\varphi (x) < +\infty \}$ is called the effective domain of $\varphi $.
Then we can define a (possibly) nonlinear multivalued mapping on $H$ by 
\begin{equation*}
\partial \varphi  :  x \mapsto \{ \eta \in H;~~(\eta , \xi - x )
		\leq \varphi (\xi ) -\varphi (x )~~\forall \xi \in D(\varphi )  \} ,
\end{equation*}
known as the subdifferential of $\varphi $.
As for the basic result in the convex analysis, we can see that
\begin{itemize}
\item The convex function $\varphi $ is continuous at the interior points of $D(\varphi )$.
Especially, if $\varphi $ is defined on the whole space $H$ (i.e., $D(\varphi ) = H $), 
then $\varphi $ is continuous on $H$.

\item The subdifferntial of a proper l.s.c. convex functional is always maximal monotone.

\item  Let $\varphi  ,\psi : H \to (  -\infty , + \infty ] $ be proper l.s.c. convex functions
such that the intersection of $D(\varphi )$ and the interior of $D(\psi )$ is not empty.
Then it follows that $ \partial  (\varphi  + \psi ) =  \partial  \varphi  + \partial  \psi $.

\end{itemize}
Moreover, 
assume that $\varphi $ is even, namely
$D(\varphi )$  is symmetric (i.e., $x \in D(\varphi )$ iff $- x \in D(\varphi )$) and $\varphi (- x ) = \varphi (x)$.
Then the subdifferential $\partial \varphi $ becomes  an odd operator,
that is,
$D(\partial \varphi )$ is symmetric and $\partial \varphi (- x ) = -\partial \varphi (x)$.
Indeed, if $\eta \in \partial \varphi (x )$,
\begin{align*}
& ~~( \eta , \xi -  x ) \leq \varphi (\xi ) -\varphi (x )~~\forall \xi \in D(\varphi ) \\
\Leftrightarrow ~~& ~~
(- \eta , -\xi  - (- x) ) \leq \varphi (-\xi ) -\varphi (-x )~~\forall \xi \in D(\varphi ) ,
\end{align*}
which implies $- \eta \in \partial \varphi (-x)$.

\subsection{Definition of Hypergraph $p$-Laplacian}
\label{defLap}
Let $V := \{ v _1 ,  \ldots ,  v_ n \}$ be a vertex set and 
$E \subset 2 ^ V  $ (the power set of $V$) be a set of hyperedges. 
Note that each $e \in E $ consists of more than one element of $V$. 
Moreover, a positive function  $w : E \to (0, \infty ) $ is defined as a weight on each hyperedge $e \in E$.
Then the triplet $G = (V , E , w)$ is called the (weighted) hypergraph.

In this paper, we shall consider some ordinary differential equations over $ \R ^ V $, 
the set of functions $x : V \to \R $.
By letting $x_ i : = x (v_ i) $,
we can identify $\R ^V $ with the $n$-dimensional Euclidean space $\R ^n $.
Define norms on $\R ^V $ by 
\begin{equation*}
|x | _{\ell ^q }
:=\begin{cases}
~~ \DS \LC \sum_{v \in V} |x(v)| ^q \RC ^{1/q} ~~&~~\text{ if } q \in [1 , \infty ) ,\\  
~~ \DS \max _{v \in V} |x(v)|  ~~&~~\text{ if } q = \infty  ,
\end{cases}
\end{equation*}
and the $\ell ^2$-inner product on $\R ^V $ by 
$x\cdot y := \sum_{v \in V } x(v) y(v) $
for each $x,y \in \R^ V $.
Recall that the standard inequality 
$  |x | _{\ell ^r } \leq  |x | _{\ell ^q } $ holds if $ q < r $.

Henceforth, we express the indicator function on  $S \subset V$ by   
$1 _S  \in \R^ V $, i.e., 
let $1 _  S (u) = 1$ if $u \in S$ and $1 _ S (u) = 0$ if $u \not \in S$.
Then we define the base polytope for the hyperedge $e \in E$  by
\begin{equation*}
B_e := \conv  \{ 1 _u - 1 _ v ;~~~u,v \in e  \},
\end{equation*}
where 
 $1 _ v := 1 _{ \{ v \} }$ and
$\conv Q$ denotes the convex hull of $Q \subset \R ^V $. 

We here define 
\begin{equation*}
 f_e (x) := \max _{u,v \in e} ( x (u ) - x(v))= \max _{u,v \in e} | x (u ) - x(v)|
= \max _{b \in B_e } b \cdot x.
\end{equation*}
Obviously, we have 
\begin{itemize}
\item $f_e (x ) \geq 0 $ and  $f_e (x )  = f_e (-x ) $ for any $x \in \R ^V$.  
\item $f_e (x) = 0 $ iff $ x(u) = x(v)$ for any $u,v \in e$.
\end{itemize}
Moreover, since $f _e $ is convex and its domain coincides with the whole space $\R ^V $
(which yields the continuity of $f_e $), we can define the subdifferential of $f_e $.
Here we recall the following maximum rule of subdifferential (see, e.g., Proposition 2.54 in \cite{M-N}):
\begin{lemma}
\label{MaxLem} 
Let $g _ j : \R ^ V  \to \R $ ($j =1 ,2, \ldots ,m$) be convex functions satisfying $D(g _ j) = \R ^ V  $.
Then for every $x \in \R ^V$ it holds that 
\begin{equation*}
\partial ( \max _{k} g_k (x) ) = \conv \LC \bigcup _{j \in J (x)} \partial g_ j (x) \RC ,
\end{equation*}
where 
\begin{equation*}
J (x):= \left\{ j \in \{  1, 2, \ldots , m \} ;~~g _j (x) = \max _  k g _k (x) \right\} .
\end{equation*}
\end{lemma}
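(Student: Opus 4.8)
The plan is to prove the two inclusions separately: ``$\supseteq$'' is elementary, while ``$\subseteq$'' rests on a formula for the directional derivative of a pointwise maximum together with a separation argument. Write $g := \max _k g_k$; since $D(g) = \R ^V$, $g$ is finite and continuous, so $\partial g(x)$ and every $\partial g_j (x)$ are nonempty, compact and convex, and the one-sided directional derivative $h'(x;d) := \lim _{t \downarrow 0} t^{-1}(h(x+td) - h(x))$ of a finite convex function $h$ equals the support function of $\partial h(x)$. Applying this to $g$ and to each $g_j$,
\begin{equation*}
g'(x;d) = \max _{\eta \in \partial g(x)} \eta \cdot d , \qquad
g_j'(x;d) = \max _{\eta \in \partial g_j (x)} \eta \cdot d \qquad (d \in \R ^V) .
\end{equation*}
I will also use that $C(x) := \conv ( \bigcup _{j \in J(x)} \partial g_j (x) )$ is compact and convex, being the convex hull of a finite union of compact sets in a finite-dimensional space.

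For ``$\supseteq$'': if $j \in J(x)$ and $\eta \in \partial g_j (x)$, then $\eta \cdot (\xi - x) \leq g_j (\xi ) - g_j (x) \leq g(\xi ) - g(x)$ for all $\xi \in \R ^V$, using $g_j \leq g$ pointwise and $g_j (x) = g(x)$; hence $\eta \in \partial g(x)$, and convexity of $\partial g(x)$ then yields $C(x) \subset \partial g(x)$.

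For ``$\subseteq$'' the main step — and the one I expect to require the most care — is the identity
\begin{equation*}
g'(x;d) = \max _{j \in J(x)} g_j'(x;d) \qquad (d \in \R ^V) .
\end{equation*}
To get it, I would argue that for $j \notin J(x)$ one has $g_j (x) < g(x)$, so by continuity of the $g_k$ (this is where $D(g_k) = \R ^V$ enters) there is $\delta > 0$ such that $g_j (x+td) < g(x+td)$ whenever $0 < t < \delta$ and $j \notin J(x)$; hence $g(x+td) = \max _{j \in J(x)} g_j (x+td)$ for $t \in (0,\delta)$, and since $g_j (x) = g(x)$ for $j \in J(x)$ the difference quotient $t^{-1}(g(x+td) - g(x))$ equals $\max _{j \in J(x)} t^{-1}(g_j (x+td) - g_j (x))$; letting $t \downarrow 0$ gives the claimed formula. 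Combining it with the support-function representations above and the fact that the maximum of a linear functional over a set equals its maximum over the convex hull, one obtains $g'(x;d) = \max _{\eta \in C(x)} \eta \cdot d$ for every $d$. Then, given $\eta \in \partial g(x)$, we have $\eta \cdot d \leq g'(x;d) = \max _{\zeta \in C(x)} \zeta \cdot d$ for all $d \in \R ^V$, so $\eta$ is dominated by the support function of the closed convex set $C(x)$; by the separation theorem this forces $\eta \in C(x)$, which completes the argument.

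Apart from the directional-derivative identity, everything used is standard convex analysis — nonemptiness and compactness of subdifferentials of finite convex functions, their description as support functions of directional derivatives, and separation of a point from a closed convex set — so the proof should be short once that identity is in place.
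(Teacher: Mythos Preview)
Your argument is correct: the ``$\supseteq$'' inclusion is immediate, and for ``$\subseteq$'' the directional-derivative identity $g'(x;d) = \max_{j\in J(x)} g_j'(x;d)$ together with the support-function description of the subdifferential and a separation argument does the job. The only step worth a remark is the interchange of limit and maximum when letting $t\downarrow 0$ in $\max_{j\in J(x)} t^{-1}(g_j(x+td)-g_j(x))$; since $J(x)$ is finite and each difference quotient converges (indeed decreases monotonically) to $g_j'(x;d)$, this is unproblematic.

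There is, however, nothing to compare your proof against: the paper does not prove Lemma~\ref{MaxLem}. It is stated as a known ``maximum rule of subdifferential'' and simply cited from the literature (Proposition~2.54 of Mordukhovich--Nam \cite{M-N}). Your proof is essentially the standard one given in such references, so in effect you have supplied what the paper omits.
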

Thanks to Lemma \ref{MaxLem}, the subdifferential of $f_e $ can be represented by
\begin{equation*}
\partial f_ e  (x)  =
\argmax _{b \in B_e} b \cdot x  = \left\{ b_e  \in B_e ;~~b_e \cdot x = \max _{b \in B_e} b \cdot x \right\}  .
\end{equation*}
By the standard argument of convex analysis,
we can assure the subdifferentiability 
of the following functional composed by the composition and sum of $f_e $:
\begin{proposition}
\label{Maximal-monotone} 
Let $g : [0 ,\infty ) \to \R $ be a convex, non-decreasing, and $C^1$-function.
Then 
\begin{equation*}
\varphi _{G , g} (x ) :=  \sum_{e \in E } w (e ) g ( f_e (x) ) 
\end{equation*}
is continuous, convex and even, and 
its domain $D (\varphi _{G ,g })$ coincides with $\R  ^V $.
Moreover, its subdifferential coincides with
\begin{align*}
L _{G , g } (x) &:=
\partial \varphi _{G , g}  (x) =
 \sum_{e \in E } w (e ) g' ( f_e (x) ) \partial   f_e (x) \\
&=\LD \sum_{e \in E } w(e) g' (f_e (x) )  b_e ;~~~b_e \in \argmax _{b \in B_e } b \cdot x  \RD ,
\end{align*}
which is an odd maximal monotone operator 
satisfying $D (L _{G , g }) = \R  ^V $.
\end{proposition}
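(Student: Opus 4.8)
The plan is to verify first the stated properties of the functional $\varphi_{G,g}$ itself, then to compute its subdifferential one hyperedge at a time via a chain rule, and finally to read off maximality, oddness and the domain from the general facts recalled in the preliminaries.

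First I would record the elementary properties of each summand. Since $f_e$ is a maximum of finitely many linear functionals, it is convex and finite on all of $\R ^V$, hence continuous; moreover $f_e(x)\ge 0$ and $f_e(-x)=f_e(x)$ as already observed. Because $g$ is convex and non-decreasing, $g\circ f_e$ is convex, and since $f_e$ takes values in $[0,\infty)=D(g)$ and $g$ is $C^1$ (hence continuous), $g\circ f_e$ is finite and continuous on all of $\R ^V$. The functional $\varphi_{G,g}$ is a finite sum of such functions with positive coefficients $w(e)$, so it is convex, continuous and $D(\varphi_{G,g})=\R ^V$, while evenness is immediate from $f_e(-x)=f_e(x)$. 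In particular $\varphi_{G,g}$ is proper and l.s.c., so $\partial\varphi_{G,g}$ is maximal monotone, and by the remark on even functionals in the preliminaries it is odd.

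Next I would prove the chain rule $\partial(g\circ f_e)(x)=g'(f_e(x))\,\partial f_e(x)$ for each $e\in E$. The cleanest route is through directional derivatives: I would show $(g\circ f_e)'(x;d)=g'(f_e(x))\,f_e'(x;d)$ for every $d\in\R ^V$, using $f_e(x+td)=f_e(x)+t f_e'(x;d)+o(t)$ together with differentiability of $g$ at $f_e(x)$ — here one notes that when $f_e(x)=0$ the point $x$ minimizes $f_e$, so $f_e'(x;d)\ge 0$ and $f_e(x+td)$ stays in $[0,\infty)$, whence the one-sided derivative $g'(0)$ suffices. By the standard duality between the subdifferential and the directional derivative of a finite convex function, $\zeta\in\partial(g\circ f_e)(x)$ iff $\zeta\cdot d\le g'(f_e(x))\,f_e'(x;d)$ for all $d$. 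If $g'(f_e(x))>0$ this says exactly $\zeta/g'(f_e(x))\in\partial f_e(x)$; if $g'(f_e(x))=0$ — the only other possibility since $g$ is non-decreasing — it forces $\zeta=0$, matching $g'(f_e(x))\,\partial f_e(x)=\{0\}$ because $\partial f_e(x)=\argmax_{b\in B_e}b\cdot x\ne\emptyset$ ($B_e$ being compact). Combining this with the identity $\partial f_e(x)=\argmax_{b\in B_e}b\cdot x$ from the discussion preceding the proposition gives the stated description of $\partial(g\circ f_e)(x)$.

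Finally I would assemble the pieces. Since every $w(e)\,g(f_e(\cdot))$ has effective domain $\R ^V$, the sum rule $\partial(\varphi+\psi)=\partial\varphi+\partial\psi$ recalled in the preliminaries applies inductively over the finitely many $e\in E$, together with $\partial(c\,h)=c\,\partial h$ for $c>0$, to yield
\[
\partial\varphi_{G,g}(x)=\sum_{e\in E}w(e)\,\partial(g\circ f_e)(x)=\sum_{e\in E}w(e)\,g'(f_e(x))\,\partial f_e(x),
\]
the right-hand side being a Minkowski sum, which is precisely the set $\{\sum_{e\in E}w(e)g'(f_e(x))b_e:\ b_e\in\argmax_{b\in B_e}b\cdot x\}$. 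That $D(L_{G,g})=\R ^V$ then follows either from the general fact that a continuous convex function is subdifferentiable at every interior point of its domain, or directly from the displayed formula, each $\argmax_{b\in B_e}b\cdot x$ being nonempty. I expect the only genuine obstacle to be making the chain rule rigorous — in particular handling the degenerate case $g'(f_e(x))=0$ and justifying use of the one-sided derivative $g'(0)$ at points where $f_e(x)=0$; the rest is bookkeeping with the convex-analysis facts quoted earlier.
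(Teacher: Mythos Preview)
Your argument is correct but follows a different strategy from the paper. The paper first checks the easy inclusion $L_{G,g}(x)\subset\partial\varphi_{G,g}(x)$ by a direct subgradient-inequality computation, and then establishes the \emph{reverse} inclusion by showing that $L_{G,g}$ is itself maximal monotone: it proves that the resolvent equation $x+L_{G,g}(x)\ni y$ is solvable for every $y$ via a Brouwer fixed-point argument applied to the map $\{a_e\}\mapsto\{g'(f_e(x))\}$, where $x$ solves the auxiliary linear-coefficient problem $x+\sum_e a_e w(e)\,\partial f_e(x)\ni y$. Since a maximal monotone operator contained in another monotone operator must coincide with it, this forces $L_{G,g}=\partial\varphi_{G,g}$. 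Your route instead computes $\partial\varphi_{G,g}$ outright, combining the sum rule from the preliminaries with a chain rule $\partial(g\circ f_e)=g'(f_e)\,\partial f_e$ proved through directional derivatives; maximality then comes for free from the general fact that subdifferentials are maximal monotone. Your approach is shorter and more in the spirit of the reference \cite{CLT} that the paper cites for the general result, and it avoids the somewhat indirect fixed-point machinery; the paper's approach, on the other hand, sidesteps the chain rule entirely (including the boundary case $f_e(x)=0$ you had to treat with care) at the cost of the Brouwer argument. Both are complete.
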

\begin{proof}
This result has already shown by Corollary 3.5 of \cite{CLT} within a fairly general setting.
For the sake of completeness, however,  we here give another proof via Brouwer's fix point theorem.

Evidently, $D( \varphi _{G , g}) = \R ^ V$ and $\varphi _{G , g}$ is even and convex.
We here only demonstrate $\partial \varphi _{G , g} (x ) = L _{G , g } (x)$.
For any $x,y \in \R^ V $ and $ b_e \in \argmax _{b \in B_e } b \cdot x $, we get 
\begin{align*}
&\LC \sum_{e \in E } w(e) g' (f_e (x) )  b_e \RC \cdot  (y -x ) \\
 \leq & 
	 \sum_{e \in E } w(e) g' (f_e (x) )   (f_e (y)  - f_e (x) )  \\ 
 \leq  &
	 \sum_{e \in E } w(e) \LC g (f_e (y) ) - g (f_e (x) )   \RC 
	 =  \varphi _{G , g} (y) -\varphi _{G , g} (x) . 
\end{align*}
Hence $ L _{G , g } (x) \subset \partial \varphi _{G , g} (x ) $ holds and
it is enough to check the maximality of $ L _{G , g } $.

Let $ \{ a_e  \} _{e\in E}$ is a vector in $\R ^ E $.
If $a _e \geq 0$ for any $e \in E $, then
$\psi (x) := \sum_{e \in E } a _e w (e)  f_e ( x )  $ is a proper continuous convex function
and its subdifferential coincides with
$\partial \psi (x) =\{  \sum_{e \in E } a _e w (e)  b_e ,~b _e \in \argmax _{b \in B_e } b \cdot x  \} $
since $D (f_e ) =\R ^V $.
Hence the maximality of subdifferential
implies that
\begin{equation}
\label{Maximal-01} 
x + \sum_{e \in E } a _e w (e)  b _e  = y ,~~~b _e \in \argmax _{b \in B_e } b \cdot x 
\end{equation}
possesses a unique solution $x \in \R ^V $ for each given $y \in \R ^V $.
Here we define a mapping on $\R ^E $ by 
$\Gamma : \{ a_e  \} _{e\in E} \to \{ g'(f_e (x)) \} _{e\in E}$,
where $x $ is a solution to \eqref{Maximal-01}.

Multiplying \eqref{Maximal-01} by $x$ and recalling $b _e \cdot x = f_e(x) \geq 0$, we have 
\begin{equation*}
  | x  | ^2 _{\ell ^{2}} \leq   | x  | _{\ell ^{2}}  | y  | _{\ell ^{2}} , 
\end{equation*}
 which yields 
\begin{equation*}
  | x  | _{\ell ^{\infty }} \leq   | x  | _{\ell ^{2}} \leq  | y  | _{\ell ^{2}} .
\end{equation*}
 Since $f_e (x ) \leq 2|x| _{\ell ^{\infty}}$, we can see that 
$\Gamma $ maps a closed convex set 
\begin{equation*}
K := \left\{  \{ a _e \} _{e\in E} \in \R ^E ;~ 0 \leq a _e \leq L := \max _{0\leq s \leq 2 |y| _{\ell ^2}} g'(s) \right\}
\end{equation*}
into itself.
Next, suppose that $a ^m_e \to a  _e $ as $m\to \infty $ for each $e \in E$.
Let  $x ^m \in \R ^V $ and $b ^m _e  \in \argmax _{b \in B_e } b \cdot x^m $ satisfy \eqref{Maximal-01} 
with $\{ a ^m_e \} _{e \in E }$, i.e., $x ^m  + \sum_{e \in E } a ^m_e w (e)  b ^m_e  = y $.
Testing this equation by $x^m $, we get 
$  | x ^m  | _{\ell ^{\infty }} \leq  | y  | _{\ell ^{2}} $.
Moreover, the definition of $B_e$ yields $ | b ^m _e | _{\ell ^{2}} \leq 2 $.
Hence we can extract convergent subsequences of  $\{ x ^m \} $ and $ \{ b ^m _e \} $.
Let their limit be $x ^{\infty} \in \R ^V $ and $b ^{\infty}_ {e} \in \R ^V $. 
Since  the demiclosedness of maximal monotone operator leads to $b ^{\infty}_ {e}  \in \partial f_e (x ^{\infty })$,
we can see that $x ^{\infty} $  is a solution to \eqref{Maximal-01} with $\{ a _e \} _{e \in E }$
 by taking the limit as $m \to \infty $,
which together with the continuity of $f_e $ and $g' $ assures the continuity of $\Gamma $ (remark that 
 the original sequences $\{ x ^m \} $ and $ \{ b ^m _e \} $ converge to 
 $x ^{\infty} $ and $b ^{\infty}_ {e} $, respectively, by the uniqueness of solution to \eqref{Maximal-01}).
Therefore
Brouwer's fix point theorem is  applicable to $\Gamma $ 
and the solvability of $x + L _{G, g } (x) \ni y $ can be obtain for any $y \in \R ^V$.
\end{proof}

Hypergraph $p$-Laplacian
is defined by $L _{G , g }$ with $g (s) = \frac{1}{p} s ^p  $ ($p \geq 1$)
in Proposition \ref{Maximal-monotone}:
\begin{equation*}
\varphi _{G , p} (x ) := \frac{1}{p}   \sum_{e \in E } w (e ) ( f_e (x) ) ^p ,
\end{equation*}
and
\begin{align*}
L _{G , p } (x) &:=
\partial \varphi _{G , p}  (x) =
 \sum_{e \in E } w(e) (f_e (x) ) ^{p-1} \partial   f_e (x) \\
&=\LD \sum_{e \in E } w(e) (f_e (x) ) ^{p-1} b_e ;~~~b_e \in \argmax _{b \in B_e } b \cdot x  \RD .
\end{align*}

\begin{remark}
\label{Usual-Garph} 
If $ p>1 $ and $G$ is a usual graph, i.e., each $e \in E$ contains two elements, 
$L _{G , p } (x) $ becomes a single-valued operator.
Indeed, since  $f_e (x) = | x (v_ i ) - x (v _ j )|$ when $e = \{ v _i , v_ j \}$, 
we get
\begin{equation*}
\varphi _{G ,p} (x) = \frac{1}{2p } \sum_{i , j = 1 }^{n}  w _{ij} |x_ i - x_ j | ^p ,
\end{equation*}
where 
$x  (v _ i ) $ is abbreviated to $x_ i$
and 
$w_ {ij} := w   ( \{ v _ i , v_ j \} )$ if $\{ v _ i , v_ j \}  \in E$
(i.e., $v _ i $ and $ v_ j$ are connected directly)
and 
$w_ {ij} := 0 $ if $\{ v _ i , v_ j \} \not \in E$ 
(i.e., $v _ i $ and $ v_ j$ are disconnected).
Clearly, 
this functional is totally differentiable except $p= 1$
and its subgradient coincides with its derivative
(see Ch. 1.2 of \cite{Bar}). 
Especially, calculating partial derivative for the case where $p=2$, 
we have 
\begin{align*}
\partial _{x_ i } 
\varphi _{G ,2} (x)
& =  \sum_{ j = 1 }^{n}  w _{ij} ( x_ i - x_ j )
= d_ i  x_ i -\sum _{ j = 1 }^{n}  w _{ij} x_ j \\
&= (- w_{i1} ,\ldots , d_ i -  w _{ii} , \ldots , w_{i n }) \cdot x , 
\end{align*}
where $ d_ i := \sum_{i=1}^{n} w _{ij} $ denoting 
the (weighted) number of vertex connected to $v_ i $.
Hence 
$L _{G ,2 } = \partial \varphi _{G , 2} $
coincides with  $ D - A $, 
where the square matrix $D =  \diag (d_1 ,\ldots , d_ n)$ and $A = (w _{ij})$ 
are called the (weighted)  degree matrix and the (weighted) adjacency matrix.

On the other hand, 
when $G $ is hypergraph, $L _{G , p } (x) $ possibly returns a set-value 
on $\bigcup _{e \in E } \bigcup _{u ,v  \in e } \{  x \in \R ^V ;~x(u) = x(v)\}$
(union of hyperplanes)
by the singularity of derivative of max function
even if $p > 1$. 
\end{remark}

\subsection{Poincar\'{e}-Wirtinger Type Inequality}
We here decompose the vertex set $V$ into ``connected component'' by the following manner:
define $S_1 \subset V$ by the set of elements connected with $v_1 $.
\begin{equation*}
S_1 := \LD v _ i \in V ;~
\begin{matrix}
 \exists  u _ 1 , \ldots u _ {k-1} \in V , ~ \exists e _1 , e _2 , \ldots e_k \in E \text{~s.t.~}\\
u_{j-1} , u _j \in e_j ~~\forall j=1, 2, \ldots , k, \text{ where } u_ 0 = v_1 \text{ and } u_k = v_ i  
\end{matrix}
\RD .
\end{equation*}
If $S _1 \subsetneq V$, let 
$i_2$ be the least index satisfying $v_ {i _ 2 } \not \in S_1$
and define
\begin{equation*}
S_2 := \LD v _ i \in V ;
\begin{matrix}
 \exists  u _ 1 , \ldots u _ {k-1} \in V , ~ \exists e _1 , e _2 , \ldots e_k \in E \text{~s.t.~}\\
u_{j-1} , u _j \in e_j ~~\forall j=1, 2, \ldots , k, \text{ where } u_ 0 = v_{i_2}   \text{ and } u_k = v_ i  
\end{matrix}
\RD .
\end{equation*}
Continue this task inductively until $S_1 \cup S_2 \cup \ldots \cup S_ l = V$ holds.
One can expect the ``heat'' is not delivered between two separated  components.
It is easy to see that
\begin{itemize}
\item $S _j \cap S_ k = \varnothing $ if $j \neq k$.

\item for any $e \in E $, 
there exists $j\in \{1 ,\ldots ,l\} $ such that  $e \subset S_j $ and $e \cap S_ k = \varnothing $ if $k \neq j$.
\end{itemize}
Here we define $\phi \in \R ^V $ by 
\begin{equation*}
\phi = \phi _{c_1 ,\ldots , c_l} = \sum_{j=1}^{l} c _ j 1 _{S_j},
\end{equation*}
where $c_1 ,\ldots , c_ l \in \R $  are some constants, 
that is to say, $\phi $ satisfies $\phi (v) = c _ j$ if $v \in S_ j$.
We next show that
all 0-eigenfunction of $L_{G, p}$ can be denoted by $\phi $ for some $c_1 ,\ldots , c_ l $.

\begin{theorem}
\label{0-eigen-Rev} 
Let $p \geq  1$. Then $x \in \R ^V $ satisfies $0 \in L_{G , p} (x)$ if and only if
$x = \phi _{c_1 ,\ldots , c_l}$
with some constant $c_1 ,  \ldots , c_l \in \R $.
Moreover, for every $x \in \R ^V $ and  $c_1 ,  \ldots , c_l \in \R $, it holds that 
\begin{equation}
\varphi _{G , p} (x + \phi _{c_1 ,\ldots , c_l})  = \varphi _{G , p} (x),~~~~
L_{G , p} (x + \phi _{c_1 ,\ldots , c_l})  = L_{G , p} (x).  
\label{0-eigen_00} 
\end{equation}
\end{theorem}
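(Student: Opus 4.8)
The plan is to reduce the first equivalence to a statement about the minimum of $\varphi _{G,p}$ and then to a purely combinatorial fact about the components $S_1,\ldots,S_l$. First I would note that $\varphi _{G,p}\ge 0$ (each $f_e\ge 0$, each $w(e)>0$, and $s\mapsto s^p/p$ is nonnegative on $[0,\infty)$) while $\varphi _{G,p}(0)=0$, so $0$ is the minimum value of $\varphi _{G,p}$. Since $L_{G,p}=\partial \varphi _{G,p}$ by Proposition~\ref{Maximal-monotone}, the defining inequality of the subdifferential shows immediately that $0\in L_{G,p}(x)$ if and only if $x$ minimizes $\varphi _{G,p}$, i.e. if and only if $\varphi _{G,p}(x)=0$, i.e. (all terms being nonnegative and $w(e)>0$) if and only if $f_e(x)=0$ for every $e\in E$, i.e. if and only if $x(u)=x(v)$ for all $u,v\in e$ and all $e\in E$. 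It then remains to check that ``$x$ is constant on every hyperedge'' is equivalent to ``$x=\phi _{c_1,\ldots,c_l}$ for some $c_1,\ldots,c_l\in\R$.''

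For that combinatorial equivalence, one direction is immediate: every $e\in E$ lies inside a single component $S_j$ (the second bullet preceding the theorem), so $\phi _{c_1,\ldots,c_l}\equiv c_j$ on $e$ and hence $f_e(\phi _{c_1,\ldots,c_l})=0$. For the converse I would fix a component $S_j$ with base vertex $v_{i_j}$ and, for an arbitrary $v_i\in S_j$, use the walk $u_0=v_{i_j},u_1,\ldots,u_k=v_i$ with $e_1,\ldots,e_k\in E$ and $u_{m-1},u_m\in e_m$ supplied by the definition of $S_j$: if $x$ is constant on each $e_m$ then $x(u_{m-1})=x(u_m)$ for every $m$, whence $x(v_i)=x(v_{i_j})$. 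Thus $x$ is constant on $S_j$; calling this value $c_j$ gives $x=\sum_{j=1}^{l}c_j\,1_{S_j}=\phi _{c_1,\ldots,c_l}$.

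For the invariance \eqref{0-eigen_00}, I would write $\phi=\phi _{c_1,\ldots,c_l}$ and fix $e\in E$; then $e\subset S_j$ for some $j$, so $\phi\equiv c_j$ on $e$ and $(x+\phi)(u)-(x+\phi)(v)=x(u)-x(v)$ for all $u,v\in e$. Hence $f_e(x+\phi)=f_e(x)$ for every $e$, and summing the corresponding terms gives $\varphi _{G,p}(x+\phi)=\varphi _{G,p}(x)$. The same observation phrased through the base polytope reads $b\cdot\phi=0$ for every $b\in B_e$ (each generator $1_u-1_v$ of $B_e$ pairs to $0$ with $\phi$), so $b\cdot(x+\phi)=b\cdot x$ on $B_e$, whence $\argmax _{b\in B_e}b\cdot(x+\phi)=\argmax _{b\in B_e}b\cdot x$, i.e. $\partial f_e(x+\phi)=\partial f_e(x)$; combining this with $f_e(x+\phi)=f_e(x)$ in the formula of Proposition~\ref{Maximal-monotone} yields $L_{G,p}(x+\phi)=L_{G,p}(x)$ term by term. (Alternatively, once $\varphi _{G,p}(\cdot+\phi)\equiv\varphi _{G,p}$ is established, equality of the subdifferentials is just the chain rule for the translation $x\mapsto x+\phi$.) None of this is deep; the only slightly delicate step is the passage from ``constant on every hyperedge'' to ``constant on every component,'' which is precisely where the walk in the definition of the $S_j$ enters.
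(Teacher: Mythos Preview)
Your proposal is correct and follows essentially the same route as the paper: reduce $0\in L_{G,p}(x)$ to $\varphi_{G,p}(x)=0$ via the minimization characterization of the subdifferential, then to $f_e(x)=0$ for all $e$, and finally to constancy on each $S_j$; for \eqref{0-eigen_00}, use $b\cdot\phi=0$ for every $b\in B_e$ (equivalently $\phi\equiv c_j$ on $e\subset S_j$) to get $f_e(x+\phi)=f_e(x)$ and $\partial f_e(x+\phi)=\partial f_e(x)$. If anything, you are slightly more explicit than the paper in spelling out the walk argument that turns ``constant on every hyperedge'' into ``constant on every component,'' which the paper leaves implicit.
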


\begin{proof}
We recall that the definition of the subdifferential yields 
\begin{equation*}
0 \in L_{G , p} (x) = \partial \varphi _{G, p} (x) ~~\Leftrightarrow ~~
 \varphi _{G, p} (x) = \min _{y \in \R ^V } \varphi _{G, p} (y) = 0.
\end{equation*}
Since $ w (e)> 0 $ and $f_e \geq 0$, $ \varphi _{G, p} (x) =\frac{1}{p}  \sum_{e \in E } w (e) \LC f _{e} (x) \RC ^p  = 0$ 
implies that  $f _{e} (x) =  \max  _{u ,v \in e} |x(u) -x(v )| =0 $ holds for every $ e \in E $,
and vice versa.
Therefore, it follows from $0 \in L_{G , p} (x) $ that $x(u) = x(v)$ for every $e \in E $,
namely, 
$x = \phi _{c_1 ,\ldots , c_l}$ with some constants $c_1 ,  \ldots , c_l \in \R $.
Conversely, we can easily obtain $0 \in L_{G , p} ( \phi _{c_1 ,\ldots , c_l} )$, i.e.,
$ \varphi _{G, p} (\phi _{c_1 ,\ldots , c_l}) = 0 $ 
since
$f _{e} (\phi _{c_1 ,\ldots , c_l}) =   |c_ j - c_j | =0 $ holds for every $ e \in E $ satisfying $e \subset S_ j$
and for every $j = 1, \ldots, l$.

Since 
$(1 _u - 1 _v ) \cdot \phi _{c_1 ,\ldots , c_l}  = c_ j - c_ j = 0 $ holds for any $u ,v \in e  \subset S_ j$,
we obtain $b \cdot \phi  _{c_1 ,\ldots , c_l}= 0$ for any $b \in B_{e} = \conv  \{ 1 _u - 1 _v ; ~~u,v \in e \}$.
Hence we obtain 
\begin{equation}
\begin{split}
&f_e (x + \phi _{c_1 ,\ldots , c_l}) =f_e (x ),~~~b \cdot ( x + \phi _{c_1 ,\ldots , c_l}) =b \cdot  x \\
& \hspace{5cm} \forall b \in B_e ,~~~\forall e \in E ,~~~ \forall  x \in \R ^V , 
\end{split}
\label{aver-can} 
\end{equation}
which leads to \eqref{0-eigen_00}.
\end{proof}
\begin{remark}
When $p> 1$, then $ L_{G , p} ( \phi _{c_1 ,\ldots , c_l} ) $ returns
a single value. Indeed,
\begin{equation*}
L_{G , p} ( \phi _{c_1 ,\ldots , c_l} ) =\sum_{e \in E }
w (e) \LC f _{e} (\phi _{c_1 ,\ldots , c_l}) \RC ^{p-1} \partial f_e (\phi _{c_1 ,\ldots , c_l} ) = 0.
\end{equation*}
\end{remark}

We define $\overline{x} \in \R ^V $ by averaging of $x \in \R ^V $ with respect to each connected component $S _ k $,
which can be expected to be a stable state of a system according to Theorem \ref{0-eigen-Rev}:
\begin{equation}
\overline{x} := \phi _{c_1 ,\ldots , c_l},~~\text{where~}
c_ k = \frac{1}{{\#}S_ k }  \sum_{v \in S_k } x (v) ~~~~~k=1 ,2, \ldots ,l .
\label{Average-vec} 
\end{equation}
Here and henceforth,
${\#} S$ stands for the number of elements belonging to $S \subset V$.
Then we can obtain the Poincar\'{e}--Wirtinger type inequality:
\begin{theorem}
\label{PoincareIn} 
Let $q \in [1, \infty ]$ and $p \geq 1$. Then every $x \in \R ^V $ and
$y \in L_{G, p} (x)$ satisfy
\begin{equation}
|x - \overline{x} | ^p _{\ell  ^q } \leq C x \cdot y 
=p C   \varphi _{G , p } (x), 
\label{Poincare_00} 
\end{equation}
where
\begin{equation}
C = C _{G, p} 
:=
\frac{1 }{ \min _{e \in E } w(e)} \LC  \sum_{j=1 }^{l} {\# } S_j  ^{ 2 -\frac{1}{p} } \RC ^p .
\label{Poincare_01} 
\end{equation}
\end{theorem}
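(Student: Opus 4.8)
The plan is to dispose of two elementary reductions and then reduce the whole inequality to a single combinatorial estimate on each connected component $S_j$, which is proved by chasing a path of hyperedges and applying Hölder's inequality. First, since $|z|_{\ell^q}\le|z|_{\ell^1}$ for every $z\in\R^V$ and every $q\in[1,\infty]$, it suffices to prove \eqref{Poincare_00} for $q=1$. Second, by Proposition \ref{Maximal-monotone} every $y\in L_{G,p}(x)$ has the form $y=\sum_{e\in E}w(e)(f_e(x))^{p-1}b_e$ with $b_e\in\argmax_{b\in B_e}b\cdot x$, whence $b_e\cdot x=f_e(x)$ and $x\cdot y=\sum_{e\in E}w(e)(f_e(x))^p=p\,\varphi_{G,p}(x)$; this is the equality in \eqref{Poincare_00}. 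Finally, since $\sum_{e\in E}w(e)(f_e(x))^p\ge(\min_{e\in E}w(e))\sum_{e\in E}(f_e(x))^p$, the theorem reduces to proving
\[
|x-\overline{x}|_{\ell^1}\ \le\ \Big(\sum_{j=1}^{l}(\#S_j)^{2-\frac1p}\Big)\Big(\sum_{e\in E}(f_e(x))^p\Big)^{1/p},
\]
because raising this to the power $p$ and using $\sum_{e}(f_e(x))^p\le(\min_{e}w(e))^{-1}\sum_{e}w(e)(f_e(x))^p=(\min_{e}w(e))^{-1}p\,\varphi_{G,p}(x)$ gives $|x-\overline{x}|_{\ell^1}^p\le pC\,\varphi_{G,p}(x)$, which combined with the equality above is \eqref{Poincare_00}.

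To prove this reduced inequality I would localize to the components. Since $\overline{x}$ equals the constant $c_j$ on $S_j$ and the $S_j$ partition $V$, we have $|x-\overline{x}|_{\ell^1}=\sum_{j=1}^{l}\sum_{v\in S_j}|x(v)-c_j|$. As $c_j$ is the average of $\{x(v'):v'\in S_j\}$, the number $x(v)-c_j$ is an average of the differences $x(v)-x(v')$ over $v'\in S_j$, so $|x(v)-c_j|\le M_j:=\max_{u,v\in S_j}|x(u)-x(v)|$ and hence $\sum_{v\in S_j}|x(v)-c_j|\le(\#S_j)M_j$. Setting $F_j:=\big(\sum_{e\in E,\ e\subset S_j}(f_e(x))^p\big)^{1/p}$, it now suffices to prove, for each $j$, the estimate $M_j\le(\#S_j-1)^{1-1/p}F_j$: this yields $\sum_{v\in S_j}|x(v)-c_j|\le(\#S_j)^{2-1/p}F_j$, and since each $\{e\in E:e\subset S_j\}$ is contained in $E$ we have $F_j\le\big(\sum_{e\in E}(f_e(x))^p\big)^{1/p}$, so summing over $j$ gives the displayed bound.

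The heart of the matter is the estimate on $M_j$. Fix $u,v\in S_j$; by the definition of $S_j$ there is a walk $u=u_0,u_1,\dots,u_k=v$ with hyperedges $e_1,\dots,e_k\in E$ such that $e_i\subset S_j$ and $u_{i-1},u_i\in e_i$ for all $i$. I claim such a walk can be taken with pairwise distinct vertices \emph{and} pairwise distinct hyperedges: whenever a vertex repeats one deletes the intervening loop, and whenever $e_i=e_{i'}$ with $i<i'$ one has $u_{i-1},u_{i'}\in e_i$, so the segment from $u_{i-1}$ to $u_{i'}$ may be replaced by the single step $u_{i-1}\to u_{i'}$ along $e_i$; each such move strictly shortens the walk, so the procedure terminates, and in particular $k\le\#S_j-1$. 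Then, using $|x(a)-x(b)|\le f_e(x)$ for $a,b\in e$, the triangle inequality, and Hölder's inequality,
\[
|x(u)-x(v)|\ \le\ \sum_{i=1}^{k}|x(u_{i-1})-x(u_i)|\ \le\ \sum_{i=1}^{k}f_{e_i}(x)\ \le\ k^{1-\frac1p}\Big(\sum_{i=1}^{k}(f_{e_i}(x))^p\Big)^{1/p}\ \le\ (\#S_j-1)^{1-\frac1p}F_j,
\]
where the last inequality uses $k\le\#S_j-1$ together with the fact that the $e_i$ are distinct and all contained in $S_j$. Taking the supremum over $u,v\in S_j$ gives the estimate.

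The step I expect to be the main obstacle is the combinatorial one in the previous paragraph: the connecting walk must be arranged to have \emph{both} distinct vertices (which forces $k\le\#S_j-1$ and hence the exponent $2-1/p$ rather than a cruder $2$) and distinct hyperedges (so that $\sum_i(f_{e_i}(x))^p$ stays below $F_j^p$), and these two requirements have to be secured at the same time by the double shortcutting above — for an ordinary graph this is just the existence of a simple path. The remaining steps are routine manipulations with Hölder's and power-mean inequalities.
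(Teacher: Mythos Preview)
Your argument is correct and follows essentially the same route as the paper: reduce to $q=1$, connect two vertices of a component by a walk of hyperedges, apply the triangle inequality and H\"older, then sum over vertices and components. The only substantive difference is that you make explicit the combinatorial point the paper leaves tacit---namely that the connecting walk can be taken with distinct vertices and distinct hyperedges (your double-shortcutting, equivalently taking a shortest walk), which is precisely what is needed for $k\le\#S_j$ and for $\sum_i(f_{e_i}(x))^p$ to be bounded by $\sum_{e\in E}(f_e(x))^p$; the paper uses this without comment when it passes from $\sum_{i=1}^k f_{e_{ji}}(x)$ to $(\#S_j)^{1/p'}(\min_e w(e))^{-1/p}(\sum_e w(e)(f_e(x))^p)^{1/p}$.
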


\begin{proof}
Let $u ,v \in S_j $ ($j=1 , \ldots , l$).
By the definition, there exist 
$v _ {j1} ,  \ldots ,$ $v_{j(k -1)} \in S _j $ 
and $e _{j1 }  ,\ldots ,e_{j k } \in E $ such that
$v _ {j(i-1)} , v_{ji } \in  e_{ji }$ for any $i =1 ,2 ,\ldots ,k$, 
where $v _ {j0} = u$ and $v _ {jk} = v$.
Then we obtain 
\begin{align*}
|x (u) - x(v)| &\leq \sum_{i=1}^{k} |x(v_{j(i-1)} ) - x(v_{ji} ) | \leq
\sum_{i=1}^{k} f _{e _{ji} } (x) \\
&  \leq \frac{ {\#} S_ j ^{1/p'}}{ \min _{e \in E } w (e) ^{1/p}} \LC \sum_{e \in E} w(e) ( f_e (x) ) ^p \RC ^{1/p} ,
\end{align*}
where 
$p' $ is the H\"{o}lder conjugate exponent:
\begin{equation}
\label{Holder}
    p'  := \begin{cases}
    ~~\DS \frac{p}{p-1} ~~&~~\text{if} ~p>1 , \\
    ~~\DS \infty ~~&~~\text{if} ~p=1 .
    \end{cases}
\end{equation}
Recalling \eqref{Average-vec}, we obtain
\begin{align*}
|x (u ) - \overline{x} (u) | 
&\leq 
\frac{1}{{\#} S_ j}  \sum_{v \in S_ j }  |x (u ) - x (v) | \\
&\leq \frac{ {\#} S_ j ^{1/p'}}{ \min _{e \in E } w (e) ^{1/p}} \LC \sum_{e \in E} w(e) ( f_e (x) ) ^p \RC ^{1/p}
\end{align*}
for each $u \in S _j$, which leads to \eqref{Poincare_00} with $q = 1$.
From the general inequality $| x  | _{\ell ^q} \leq |x | _{\ell ^1}$, 
we can derive  \eqref{Poincare_00} for every $q \in ( 1 , \infty ]$.
\end{proof}

\begin{remark}
\label{Rem2-3-1} 
Let $y _j \in L _{G ,p} (x _ j ) $ ($j=1,2$), then we can easily obtain 
\begin{equation}
\label{differ}
\begin{split}
&(y_ 1 -  y_2 ) \cdot (x_1 - x_2 ) \\
&\geq 
\sum_{e \in E} w(e) \LC ( f _e (x_1) ) ^{p-1}  - ( f _e (x_2) ) ^{p-1}  \RC \LC  f _e (x_1)  -  f _e (x_2)  \RC .
\end{split}
\end{equation}
However, it seems to be difficult to establish an estimate 
of $| x_1 - x_2 | _{\ell ^2}$ or  $| x_1  - \overline{ x_1}  - x_2 + \overline{ x_2}  | _{\ell ^2}$ 
from this  inequality.
For instance, let ${\#}V = 4$, $E = \{ V \}$,  $w \equiv 1$, and 
\begin{equation*}
x_1 =
\begin{pmatrix}
x_1 (v_1) \\
x_1 (v_2) \\
x_1 (v_3) \\
x_1 (v_4)
\end{pmatrix} =
\begin{pmatrix}
1 \\
a_1 \\
b_1 \\
-1
\end{pmatrix} ,
~~~
x_2 =
\begin{pmatrix}
x_2 (v_1) \\
x_2 (v_2) \\
x_2 (v_3) \\
x_2 (v_4)
\end{pmatrix} =
\begin{pmatrix}
1 \\
a_2 \\
b_2 \\
-1
\end{pmatrix} ,
\end{equation*}
where $ |a _j| , |b_j | < 1$ ($j=1,2$).
Since $\argmax _{v \in V} x _ 1(v) =\argmax _{v \in V} x _ 2(v) =v_1$
and  $\argmin _{v \in V} x _ 1(v) =\argmin _{v \in V} x _ 2(v) =v_4$,
we have $f_e (x_1) =  f_e (x_2) = (1 - (-1 )) = 2$ and 
\begin{equation*}
L_{G, p} (x_1 ) =
L_{G, p} (x_2 ) 
 =
\begin{pmatrix}
2^{p-1} \\
0 \\
0 \\
- 2^{p-1}
\end{pmatrix} .
\end{equation*}
Hence we obtain
$ ( L _{G , p } (x_1 ) -  L _{G , p } (x_2 ) ) \cdot (x_1 - x_2) = 0 $
although $a _j ,  b_j \in (-1 , 1)$ can be chosen arbitrarily
so that $x_1 \neq x_2 $ and $ \overline{x_1} = \overline{ x_2} $.
\end{remark}

\section{Heat Equation}

\subsection{Cauchy Problem}
In this section, we consider the ordinary differential equation associated with the hypergraph $p$-Laplacian
by using properties given above.
\begin{equation*}
\text{(C)}
\begin{cases}
~~\DS  \frac{d}{dt} x (t) + L_{G ,p }(x (t) ) \ni h (t)  ~~~&~~t \in (0,T), \\
~~x (0) = x_0 , &
\end{cases}
\end{equation*}
where 
$x: [0,T] \to  \R ^V $ is an unknown function and
$h: [0,T] \to  \R ^V$ is a given external force.
Henceforth, we  write  $x'(t) := \frac{d}{dt} x(t) $, 
\begin{align*}
&\DS L^{q} (0, T ; \R ^V) := \LD  h : [0,T ] \to \R ^V ;~~\int_{0}^{T}  |h(t) | ^q_{\ell ^2 } dt < \infty  \RD  , \\
&\DS W^{1, q} (0, T ; \R ^V) := \LD  h : [0,T ] \to \R ^V ;
			~~\int_{0}^{T} \LC  |h(t) | ^q_{\ell ^2 } +  |h' (t) | ^q_{\ell ^2 }\RC  dt < \infty  \RD  ,
\end{align*}
 for $ q \in [1 , \infty )$,
and $h \in L ^{\infty} (0,T ; \R ^V ) $ iff $\esssup _{t \in [0,T] } |h(t) | _{\ell ^2} <\infty $. 
 
Since the maximal monotonicity of $L_{G ,p }$ 
in the Hilbert space $\R ^V$ endowed with $\ell ^2$-norm is shown in Proposition \ref{Maximal-monotone},
the abstract theory by K\={o}mura \cite{K} (see also Theorem 3.6 and 3.7 in Br\'{e}zis \cite{Bre})
can be applied and it holds that 
\begin{theorem}
For every $x _0 \in \R ^V $ and $h \in L ^2 ( 0, T ; \R ^V)$,
\textrm{(C)} possesses a unique solution satisfying
\begin{align*}
& x \in W ^{1,2}(0,T ; \R ^V  ), \\
& t \mapsto \varphi _{G ,p} (x (t)) ~~\text{ is absolutely continuous on }[0,T], \\
&   \LC \int_{0}^{t} |x '(s) | ^2 _{\ell ^2} ds  \RC ^{1/2}
	\leq \LC \int_{0}^{t} |h (s) | ^2 _{\ell ^2} ds \RC ^{1/2} + \sqrt{ \varphi _{G, p} (x_0)}
~~~\forall  t \in [0,T].
\end{align*}
Moreover, if $ t_ 0  \in [0, T ) $ is a right-Lebesgue point of $h$,
$x $ is right-differentiable at $t_0 $ and  the right-derivative of $x$ denoted by $ \frac{d^+ x}{dt }$ satisfies 
\begin{equation}
 \frac{d^+ x}{dt } (t_ 0 ) = \LC h(t_0 + 0) - L_{G ,p} (x (t_0 )) \RC ^{\circ }  ,
\label{Right} 
\end{equation}
where $h(t_0 + 0 ) := \lim_{\tau \to + 0} \frac{1}{\tau } \int_{t_0 }^{t_0 +\tau } h(s) ds  $.
If $h \in W ^{1,1} (0,T ; \R ^V )$, the solution also fulfills
\begin{align*}
& t x '(t) \in L^{\infty } (0,T ; \R ^V  ), \\
& \left | \frac{d^+ x}{dt } (t)  \right | _{\ell ^2}
\leq  |h (t) | ^2 _{\ell ^2}
		+ \frac{1}{t}  |x_0 - \overline{x_0 }  |  _{\ell ^2}
		+ \frac{1}{t ^2 } \int_{0}^{t} s ^2  \left | h' (s)  \right | _{\ell ^2} ds \\
&\hspace{3mm} 		+ \frac{\sqrt{2}}{t} 
\LC \int_{0}^{t} s   \left | h' (s)  \right | _{\ell ^2} ds \RC ^{1/2}		
\LC |x_0 - \overline{x_0 }  |  _{\ell ^2} +  \int_{0}^{t} \left | h (s)  \right | _{\ell ^2} ds \RC ^{1/2}~~\forall  t \in (0,T).
\end{align*}
\end{theorem}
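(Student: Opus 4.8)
The plan is to read off the first three assertions from the K\={o}mura--Br\'{e}zis theory for the subdifferential, and to prove the $W^{1,1}$-estimate by a weighted energy argument using the Poincar\'{e}--Wirtinger inequality of Theorem \ref{PoincareIn}. By Proposition \ref{Maximal-monotone} we have $L_{G,p} = \partial \varphi_{G,p}$ with $\varphi_{G,p}$ proper, convex, l.s.c.\ and everywhere finite and continuous, so $D(\varphi_{G,p}) = D(L_{G,p}) = \R^V$; in particular $x_0 \in D(\varphi_{G,p})$ always, and no compatibility condition is needed. Theorems 3.6--3.7 of \cite{Bre} (see also \cite{K}) then give the unique solution with $x \in W^{1,2}(0,T;\R^V)$, the absolute continuity of $t \mapsto \varphi_{G,p}(x(t))$, and $x'(t) = (h(t) - L_{G,p}(x(t)))^{\circ}$ for a.e.\ $t$; the regularizing part of the same theory yields the right-differentiability and formula \eqref{Right} at each right-Lebesgue point $t_0$ of $h$, the hypothesis $x(t_0) \in D(L_{G,p})$ being automatic. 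For the $L^2$-bound on $x'$ I would test \eqref{Eq} with $x'(t)$: writing $y(t) := h(t) - x'(t) \in L_{G,p}(x(t))$ and using the chain rule $\frac{d}{dt}\varphi_{G,p}(x(t)) = y(t)\cdot x'(t)$ along absolutely continuous curves, one gets $|x'(t)|_{\ell^2}^2 + \frac{d}{dt}\varphi_{G,p}(x(t)) = h(t)\cdot x'(t)$; integrating on $(0,t)$, discarding $\varphi_{G,p}(x(t)) \ge 0$, applying Cauchy--Schwarz, and then using the scalar implication $a^2 \le ca + b \Rightarrow a \le c + \sqrt{b}$ with $a = (\int_0^t |x'|_{\ell^2}^2)^{1/2}$ produces exactly the claimed estimate.

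For the $W^{1,1}$-estimate, assume $h \in W^{1,1}(0,T;\R^V)$. The abstract theory makes $t \mapsto \frac{d^+x}{dt}(t)$ of bounded variation; differentiating \eqref{Eq} and pairing with $x'$, the monotonicity of $\partial\varphi_{G,p}$ forces $y'\cdot x' \ge 0$, whence $\frac12\frac{d}{dt}|x'(t)|_{\ell^2}^2 \le |h'(t)|_{\ell^2}\,|x'(t)|_{\ell^2}$, i.e.\ $d\,|x'|_{\ell^2} \le |h'(t)|_{\ell^2}\,dt$ in the sense of measures on $(0,T)$. Multiplying by $t^2$ and integrating over $(0,t)$ --- legitimate because the singular part of $d\,|x'|_{\ell^2}$ is nonpositive and $s^2|x'(s)|_{\ell^2} \to 0$ as $s\to 0^+$ (a consequence of $x' \in L^2(0,T)$) --- gives $t^2|x'(t)|_{\ell^2} \le 2\int_0^t s|x'(s)|_{\ell^2}\,ds + \int_0^t s^2|h'(s)|_{\ell^2}\,ds$, which already supplies the $t^{-2}$-regularized terms.

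It then remains to bound $\int_0^t s|x'(s)|_{\ell^2}\,ds$. By Cauchy--Schwarz this is $\le 2^{-1/2}\,t\,(\int_0^t s|x'(s)|_{\ell^2}^2\,ds)^{1/2}$, so I would estimate $\int_0^t s|x'(s)|_{\ell^2}^2\,ds$ by testing \eqref{Eq} with $s\,x'(s)$ and integrating by parts in $s$, using $\int_0^t s\frac{d}{ds}\varphi_{G,p}(x(s))\,ds = t\varphi_{G,p}(x(t)) - \int_0^t \varphi_{G,p}(x(s))\,ds$ and discarding the nonnegative boundary term; the forcing contribution $\int_0^t s\,h(s)\cdot x'(s)\,ds$ is controlled by Cauchy--Schwarz and a further integration by parts (whose boundary term produces the term involving $h(t)$), and the remaining $\int_0^t \varphi_{G,p}(x(s))\,ds$ by the energy bound above together with the invariance $\varphi_{G,p}(x(s)) = \varphi_{G,p}(x(s)-\overline{x(s)})$ from \eqref{0-eigen_00}, the elementary bound $\varphi_{G,p}(z) \le C'\,|z-\overline{z}|_{\ell^2}^p$ coming from $f_e(z) \le 2|z|_{\ell^2}$, and the Poincar\'{e}--Wirtinger inequality of Theorem \ref{PoincareIn}. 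Finally, $\varphi_{G,p}(x_0) = \varphi_{G,p}(x_0 - \overline{x_0})$ lets me replace all initial-data quantities by $|x_0 - \overline{x_0}|_{\ell^2}$, and assembling the pieces gives the stated bound together with $t x'(t) \in L^{\infty}(0,T;\R^V)$.

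I expect the main obstacle to be this last step: carrying the three weighted identities (tests with $x'$, with $s\,x'$, and the $t^2$-weighting of the differentiated equation) simultaneously, justifying the integration by parts against the merely-BV map $t \mapsto |x'(t)|_{\ell^2}$ (using that the relevant measure is $\le |h'(t)|_{\ell^2}\,dt$ modulo a nonpositive part), and --- since $\varphi_{G,p}(z) \le C'\,|z-\overline{z}|_{\ell^2}^p$ carries the exponent $p$ --- keeping the homogeneity consistent so that the final constant $C$ absorbs any surplus powers of $|x_0 - \overline{x_0}|_{\ell^2}$ and of the $L^1$-norm of $h$ on $[0,T]$. Everything else is a direct invocation of the abstract theory and of Theorems \ref{PoincareIn} and \ref{0-eigen-Rev}.
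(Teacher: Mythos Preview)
The paper gives no proof of this theorem beyond the sentence preceding it: once Proposition~\ref{Maximal-monotone} establishes $L_{G,p}=\partial\varphi_{G,p}$ with $D(\varphi_{G,p})=\R^V$, the entire statement --- including the $W^{1,1}$-estimate --- is quoted directly from Theorems~3.6--3.7 of Br\'{e}zis~\cite{Bre}. Your treatment of the first block (existence, uniqueness, absolute continuity of $t\mapsto\varphi_{G,p}(x(t))$, the $L^2$-bound on $x'$, and \eqref{Right}) matches this exactly.

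For the $W^{1,1}$-estimate you take a harder and genuinely different route, and the obstacle you anticipate is real. The paper's implicit input is only that $\overline{x_0}$ is a minimizer of $\varphi_{G,p}$ (Theorem~\ref{0-eigen-Rev}), so Br\'{e}zis's estimate --- which in \cite{Bre} is stated in terms of $|x_0-v|$ for any $v$ with $0\in\partial\varphi(v)$ --- applies verbatim with $v=\overline{x_0}$ and yields exactly the displayed inequality with its specific constants $1/t$, $1/t^2$, $\sqrt{2}/t$. The Poincar\'{e}--Wirtinger inequality (Theorem~\ref{PoincareIn}) is \emph{not} used here at all; it enters only later, in Theorem~\ref{Decay}. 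Your reconstruction via weighted energy identities and the bound $\varphi_{G,p}(x_0)\le C'|x_0-\overline{x_0}|_{\ell^2}^p$ cannot recover these constants and, for $p\neq 2$, produces the wrong power of $|x_0-\overline{x_0}|_{\ell^2}$ --- precisely the homogeneity mismatch you flag but do not resolve. If one insists on redoing the Br\'{e}zis argument, the correct way to bound $\int_0^t\varphi_{G,p}(x(s))\,ds$ is to test the equation with $x(s)-\overline{x_0}$ and use only that $\varphi_{G,p}(\overline{x_0})=0$ together with the contraction $|x(s)-\overline{x_0}|_{\ell^2}\le |x_0-\overline{x_0}|_{\ell^2}+\int_0^s|h|_{\ell^2}$; this gives $\int_0^t\varphi_{G,p}(x(s))\,ds\le \tfrac12\bigl(|x_0-\overline{x_0}|_{\ell^2}+\int_0^t|h|_{\ell^2}\bigr)^2$ with the right homogeneity, and Theorem~\ref{PoincareIn} plays no role.
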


\begin{remark}
As for a significant property of nonlinear multivalued evolution equation,
we here give an example of solution which dose not belong to $C ^1$-class.
Let ${\#}V = 4$, $E = \{ V \}$, and $w \equiv 1$.
We solve (C) with $ p= 2$ and given  data
\begin{equation*}
h\equiv 0, \hspace{5mm} x_ 0 = \begin{pmatrix}
x_0(v_1) \\
x_0(v_2) \\
x_0(v_3) \\
x_0(v_4) 
\end{pmatrix}
=
\begin{pmatrix}
2 \\
1 \\
-1 \\
-2
\end{pmatrix} .
\end{equation*}
When $t$ is sufficiently small, 
the order of initial data is preserved by the continuity,
i.e.,  $x _4 ( t ) <  x _3 ( t ) < x_2 ( t ) < x _1 ( t ) $ holds
(here and henceforth, we write $ x _i (t)  := x (t) (v_i ) $).
Then 
\begin{equation*}
L_{G  , 2 } (x(t)) =    
f _e (x(t))
 \begin{pmatrix}
1 \\
0 \\
0 \\
-1 \\
\end{pmatrix}
= 
\begin{pmatrix}
x_1 (t) - x_4(t) \\
0 \\
0 \\
x_4 (t) - x_1(t) \\
\end{pmatrix}.
\end{equation*}
Hence (C) is equivalent to  
\begin{equation*}
 x_1 ' (t) =-  x_1 (t) + x_4(t),
 ~~~
  x_4 ' (t) =  x_1 (t) - x_4(t),
 ~~~
 x_2 ' (t) =  x_3 ' (t) =  0 ,
\end{equation*}
which yields 
\begin{equation*}
x_1 (t) = 2 e ^{-2t} ,~~x_2 (t) = 1 ,~~x_3 (t) = -1 ,~~
x_4 (t) = -2 e ^{-2t} 
\end{equation*}
until $t \leq t_0 = \frac{1}{2}  \log 2  $.

In order to see the behavior of solution 
after  $x_1 $ and $ x_4 $ touches $x_2 $ and $ x_3 $, respectively,
i.e., $t \geq t_0 $,
we have to specify the minimal section of
\begin{equation*}
   L_{G , 2} (x (t))  
= 
\LD (x _1 (t) -x_4(t)) \begin{pmatrix}
\lambda _1 \\
\lambda _2 \\
- \mu _1 \\
- \mu _2 
\end{pmatrix};~~
\begin{matrix}
\lambda _ 1 ,  \lambda _ 2 , \mu _ 1 ,  \mu _ 2 \geq 0 ,\\
\lambda _ 1 +  \lambda _ 2 =1 , \\
 \mu _ 1 +  \mu _ 2 =1.
\end{matrix}
 \RD .
\end{equation*}
Since 
$ \sqrt{\lambda ^2 _1 + \lambda ^2 _2+  \mu ^2 _1 + \mu ^2 _2 }  $
attains its minimum 
at $\lambda _ 1 =   \lambda _ 2 =  \mu _ 1 =  \mu _ 2  = 1/2$, 
we get 
\begin{equation*}
   L_{G , 2} ^{\circ } (x (t))  
=\frac{ (x _1 (t) -
x_4(t))}{2}  \begin{pmatrix}
1 \\
1 \\
- 1\\
- 1 
\end{pmatrix}
=\frac{ (x _2 (t) -
x_3(t))}{2}  \begin{pmatrix}
1 \\
1 \\
- 1\\
- 1 
\end{pmatrix}.
\end{equation*}
By \eqref{Right}, (C) with $t \geq t_0 $ implies
 \begin{center}
\begin{tabular}{ll}
$\DS x_1 ' (t) =- \frac{1}{2} ( x_1 (t) - x_4(t) ) , $
    & $\DS  x_2 ' (t) =- \frac{1}{2} ( x_2 (t) - x_3(t) ) , $  \\[5mm]
$\DS x_3 ' (t) = \frac{1}{2} ( x_2 (t) - x_3(t) ) , $ 
    & $\DS x_4 ' (t) = \frac{1}{2} ( x_1 (t) - x_4(t) ) .$  
\end{tabular}
\end{center}
Therefore the solution of (C) is 
\begin{align*}
x_1 (t) &=
\begin{cases}
~~\DS  2 e^{-2t} ~~&~~\text{ if  }t \leq  \frac{1}{2}  \log 2,\\
~~\DS  \sqrt{2}  e^{-t} ~~&~~\text{ if  }t \geq  \frac{1}{2} \log 2 ,
\end{cases}~~~
~~x_4(t) = -x_1(t) , \\
x_2 (t) &=
\begin{cases}
~~\DS  1 ~~&~~\text{ if  }t \leq  \frac{1}{2}  \log 2 ,\\
~~\DS  \sqrt{2}  e^{-t} ~~&~~\text{ if  }t \geq  \frac{1}{2}  \log 2  ,
\end{cases}~~~~~
x_3(t) = -x_2(t).
\end{align*}
Analogously, we can construct a solution to (C) for the same hypergraph as the above with $ p\neq 2$
only by replacing  $L_{G_, 2 } (x) = ( f_e (x) )  \partial f_e (x)$ with
 $L_{G_, p } (x) = ( f_e (x) ) ^{p-1} \partial f_e (x)$.  
\end{remark}

Multiply the equation of (C) by $1 _{S_k}$ ($k=1 ,2, \ldots l$),
we have 
\begin{equation*}
\frac{d}{dt} \sum_{v \in S _k} x (t) (v ) = \sum_{v \in S _k} h (t) (v ) ,
\end{equation*}
which yields the following identity: 
\begin{equation*}
 \overline{x}  (t)  = \overline{x_0} + \int_{0}^{t}  \overline{h} (s)  ds .
\end{equation*}
Viewing this mass conservation law derived from \eqref{aver-can} 
and recalling Theorem \ref{0-eigen-Rev} and Poincar\'{e}-Wirtinger's inequality \eqref{Poincare_00},
one may expect to deal with the hypergraph Laplacian $L_{G,p}$ and \eqref{Eq}
by treatments similar to those for   
the standard Laplacian  $- \Delta $ with homogeneous Neumann boundary condition
and parabolic equations governed by the Neumann Laplacian.

We here consider the large time behavior of solution to (C).
Let $h \equiv 0$, then $ \overline{x}  (t)  = \overline{x_0} $ holds for any $t > 0 $.
By \eqref{aver-can} and Theorem \ref{PoincareIn},
\begin{align*}
 &\sum_{e \in E } w(e) (f_e (x(t)) ) ^{p-1} b_e (t) \cdot (x(t) - \overline{x_0} ) \\
   = &
 \sum_{e \in E } w(e) (f_e (x(t)) ) ^{p} \geq  \frac{1}{C_{G ,p }} |x(t) -\overline{x_0} |^p _{\ell ^2} 
~~~~ ~~~\forall 
   b_e (t) \in \argmax _{b \in B_e } b \cdot x (t).
\end{align*}
Hence multiplying (C) by $x(t) - \overline{x_0}$, we have 
\begin{equation*}
\frac{1}{2} \frac{d}{dt} |x(t) - \overline{x_0}| ^2 _{\ell ^2} 
 +\frac{1}{C_{G ,p }}  |x(t) - \overline{x_0}| ^p _{\ell ^2} \leq 0 ,
\end{equation*}
which leads to
\begin{theorem}
\label{Decay}
Let $h\equiv 0$. Then the solution to (C) satisfies 
\begin{center}
\begin{tabular}{ll}
$\DS X(t) \leq  \LC X(0) ^{\frac{2-p}{2} }  
			-  \frac{(2-p) t }{ C_{G, p}}  \RC _+ ^{\frac{2}{2-p} }$
	& if $1 \leq p<2 $, \\[3mm]
$\DS X(t) \leq X(0) \exp \LC -  \frac{ 2t }{ C_{G, p}}  \RC$
  & if $p=2$, \\[3mm]
$\DS X(t) \leq   \LC X(0) ^{- \frac{p-2}{2} }  
				+  \frac{(p-2) t }{ C_{G, p}}   \RC ^{- \frac{2}{p-2} }$
  & if  $p>2 $,\\[3mm]  
\end{tabular}
\end{center}
where $X (t) := |x (t) - \overline{x_0} | ^2 _{\ell ^2}$, 
 $( s ) _+ := \max \{ s, 0\}$, and $C_{G,p}$ is a constant defined by \eqref{Poincare_01}.
\end{theorem}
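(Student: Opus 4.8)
The plan is to establish the differential inequality already derived in the text, namely
\begin{equation*}
\frac{1}{2} X'(t) + \frac{1}{C_{G,p}} X(t)^{p/2} \le 0 \qquad \text{for a.e. } t>0,
\end{equation*}
and then integrate it by separation of variables in the three cases $1\le p<2$, $p=2$, $p>2$. The inequality itself follows by testing (C) with $x(t)-\overline{x_0}$: since $h\equiv 0$ gives $\overline{x}(t)=\overline{x_0}$, the pairing $x'(t)\cdot(x(t)-\overline{x_0})$ equals $\frac{1}{2}\frac{d}{dt}|x(t)-\overline{x_0}|^2_{\ell^2}=\frac12 X'(t)$ (using that $t\mapsto |x(t)-\overline{x_0}|^2_{\ell^2}$ is absolutely continuous because $x\in W^{1,2}$), while by Theorem \ref{PoincareIn} together with \eqref{aver-can} any $y(t)\in L_{G,p}(x(t))$ satisfies $y(t)\cdot(x(t)-\overline{x_0})=p\,\varphi_{G,p}(x(t))\ge C_{G,p}^{-1}|x(t)-\overline{x_0}|^p_{\ell^2}=C_{G,p}^{-1}X(t)^{p/2}$. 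This reproduces exactly the displayed inequality preceding the theorem statement.

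Next I would integrate. For $p=2$ the inequality is $X'(t)\le -\frac{2}{C_{G,2}}X(t)$, and Gronwall's lemma immediately gives $X(t)\le X(0)\exp(-2t/C_{G,2})$. For $p\ne 2$, on any interval where $X(t)>0$ we may divide by $X(t)^{p/2}$ to get $\frac{d}{dt}X(t)^{1-p/2}= (1-\tfrac p2)X(t)^{-p/2}X'(t)$, whose sign depends on whether $1-p/2$ is positive or negative. When $1\le p<2$ we obtain $\frac{d}{dt}X(t)^{(2-p)/2}\le -\frac{2-p}{C_{G,p}}$, so integrating from $0$ to $t$ yields $X(t)^{(2-p)/2}\le X(0)^{(2-p)/2}-\frac{(2-p)t}{C_{G,p}}$; since the left side is nonnegative, raising to the power $2/(2-p)$ and inserting $(\cdot)_+$ gives the stated bound (and the solution has in fact reached its average in finite time once the right side hits zero, consistent with finite extinction). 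When $p>2$ the exponent $1-p/2<0$, so $\frac{d}{dt}X(t)^{-(p-2)/2}\ge \frac{p-2}{C_{G,p}}$, integration gives $X(t)^{-(p-2)/2}\ge X(0)^{-(p-2)/2}+\frac{(p-2)t}{C_{G,p}}$, and inverting (noting $z\mapsto z^{-2/(p-2)}$ is decreasing) produces the third estimate.

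The one technical point requiring care is the possibility that $X(t)=0$ at some time (possible when $1\le p<2$, and also trivially if $x_0=\overline{x_0}$): then $X(t)^{-p/2}$ is undefined and one cannot blindly divide. I would handle this by working on the open set $\{t: X(t)>0\}$, which is a union of intervals; on each such interval the manipulation above is legitimate, and once $X$ reaches $0$ it stays $0$ because $X$ is nonincreasing and nonnegative (from $X'\le 0$). A clean way to package this, avoiding case analysis on the zero set, is to fix $\varepsilon>0$, note $Y_\varepsilon(t):=X(t)+\varepsilon$ satisfies $Y_\varepsilon'(t)\le -\frac{2}{C_{G,p}}(Y_\varepsilon(t)-\varepsilon)^{p/2}\le$ (for the comparison one still needs $X>0$ somewhere) — actually the simplest rigorous route is: if $X(t_1)=0$ for some $t_1$ then $X\equiv 0$ on $[t_1,\infty)$ and all three claimed bounds hold trivially there (their right-hand sides are $\ge 0$), so it suffices to argue on the maximal interval $[0,t_1)$ where $X>0$, on which the separation-of-variables computation is valid and extends to the closed interval by continuity. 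I expect this bookkeeping around extinction to be the only real obstacle; the rest is the routine ODE comparison already signposted in the text.
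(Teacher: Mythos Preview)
Your proposal is correct and follows exactly the route the paper takes: the paper derives the same differential inequality $\frac{1}{2}X'(t)+\frac{1}{C_{G,p}}X(t)^{p/2}\le 0$ by testing (C) with $x(t)-\overline{x_0}$ (using the mass conservation $\overline{x}(t)=\overline{x_0}$, \eqref{aver-can}, and Theorem~\ref{PoincareIn}) and then simply states that this ``leads to'' Theorem~\ref{Decay}, leaving the separation-of-variables integration implicit. Your write-up supplies the details the paper omits, including the careful handling of the extinction time when $1\le p<2$, and is otherwise identical in spirit.
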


\begin{remark}
Optimality of decay rate in Theorem \ref{Decay} can be easily obtained as follows.
Since $\overline{y}(u) = \overline{y}  (v) $ holds for any $y \in \R ^V $ if $ u , v \in e$,
we have 
\begin{align*}
f_e (x (t  ) ) & = \max _{u ,v \in e } |x(t) (u) -x(t) (v) | 
	= \max _{u ,v \in e } |x(t) (u) - \overline{x_0} (u) + \overline{x_0} (v) -  x(t) (v) | \\
& \leq  \sum_{u \in V}  |x(t) (u) - \overline{x_0} (u) | \leq  \sqrt{n} 
		| x(t)  - \overline{x_0} | _{\ell ^2} .
\end{align*}
Hence testing  (C) by $x (t) - \overline{x_ 0} $, we can get
\begin{align*}
\frac{1}{2} \frac{d}{dt} | x(t)  - \overline{x_0} | ^2 _{\ell ^2}
& = - p \varphi _{G, p} (x (t) )  \\
& \geq  -  p \LC  \#  E \RC
n^{p/2} \max _{e\in E } w(e) | x(t) - \overline{x_0}  | ^p _{\ell ^2} ,
\end{align*}
which yields the estimate of $X(t) = | x(t) - \overline{x_0}  | ^2 _{\ell ^2}$ from below.
\end{remark}

\subsection{Periodic Problem}
Next we consider the  following time-periodic problem:
\begin{equation*}
\text{(P)}
\begin{cases}
~~\DS  \frac{d}{dt} x (t) + L_{G ,p }(x (t) ) \ni h (t)  ~~~&~~t \in (0,T), \\
~~x (0) = x (T) . &
\end{cases}
\end{equation*}
Note that the abstract result can not be applied since  $\varphi _{G ,p }$ is not coercive.
Multiplying (P) by $1 _{S_k}$, integrating over $[0,T]$ and using the periodicity,
we have 
\begin{equation}
\int_{0}^{T}   \overline{h} (t) dt =0
\label{A-Per} 
\end{equation}
as a necessary condition of existence of solution. 
Recall \eqref{Holder}, i.e., $p' $ stands for the the H\"{o}lder conjugate of $p$.

\begin{theorem}
\label{Per-Pr}
Let $h \in L^{\bar{p'}} ( 0,T ; \R ^V )$ with
 $\bar{p'} :=\max \{ 2 , p' \}$ and assume \eqref{A-Per}.
Then (P) possesses at least one solution $x \in W ^{1,2}(0,T ; \R ^V  )$. 
\end{theorem}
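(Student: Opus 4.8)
The obstruction is that, by Theorem~\ref{0-eigen-Rev}, $\varphi_{G,p}$ is not coercive, so the K\={o}mura--Br\'{e}zis machinery does not apply directly to (P); the situation parallels the periodic problem for the homogeneous Neumann Laplacian. The plan is to \emph{regularize}: for $\epsilon>0$ consider
\begin{equation*}
\text{(P)}_{\epsilon}\qquad x'(t)+\epsilon x(t)+L_{G,p}(x(t))\ni h(t)\ \ (t\in(0,T)),\qquad x(0)=x(T),
\end{equation*}
solve it for each $\epsilon$, establish a priori bounds that are \emph{uniform in} $\epsilon$, and let $\epsilon\to0$. Here $\epsilon\,\id+L_{G,p}=\partial\bigl(\tfrac{\epsilon}{2}|\cdot|_{\ell^2}^2+\varphi_{G,p}\bigr)$ is maximal monotone and strongly monotone with constant $\epsilon$.

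First I would solve $\text{(P)}_{\epsilon}$. By the K\={o}mura--Br\'{e}zis theory the Cauchy problem for $\text{(P)}_{\epsilon}$ has a unique solution for every initial datum, and if $x^{(1)},x^{(2)}$ are two such solutions then $\tfrac12\tfrac{d}{dt}|x^{(1)}-x^{(2)}|_{\ell^2}^2\le-\epsilon|x^{(1)}-x^{(2)}|_{\ell^2}^2$, so the time-$T$ (Poincar\'{e}) map on $\R^V$ contracts the $\ell^2$-distance by $e^{-\epsilon T}<1$. Banach's fixed point theorem then produces a unique periodic solution $x_\epsilon\in W^{1,2}(0,T;\R^V)$ of $\text{(P)}_{\epsilon}$.

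Next come the $\epsilon$-uniform estimates. (a) Testing $\text{(P)}_{\epsilon}$ against $1_{S_k}$ shows that $\overline{x_\epsilon}$ (which coincides with the $\ell^2$-orthogonal projection of $x_\epsilon$ onto the kernel $\{\phi_{c_1,\dots,c_l}\}$) solves the linear ODE $\overline{x_\epsilon}'+\epsilon\,\overline{x_\epsilon}=\overline h$ with $\overline{x_\epsilon}(0)=\overline{x_\epsilon}(T)$; solving it and using the compatibility condition \eqref{A-Per} together with $|e^{-\epsilon s}-1|\le\epsilon s$ gives $\sup_{t}|\overline{x_\epsilon}(t)|_{\ell^2}\le c_0$ with $c_0$ independent of $\epsilon$. (b) Since $L_{G,p}(x)$ is orthogonal to the kernel (by \eqref{aver-can}) and $y\cdot x=p\,\varphi_{G,p}(x)$ for $y\in L_{G,p}(x)$ (by \eqref{Poincare_00}), testing $\text{(P)}_{\epsilon}$ against $x_\epsilon-\overline{x_\epsilon}$ and integrating over $[0,T]$ — the total-derivative term vanishes because $x_\epsilon-\overline{x_\epsilon}$ is $T$-periodic — yields
\begin{equation*}
\epsilon\int_0^T|x_\epsilon-\overline{x_\epsilon}|_{\ell^2}^2\,dt+p\int_0^T\varphi_{G,p}(x_\epsilon)\,dt=\int_0^T h\cdot(x_\epsilon-\overline{x_\epsilon})\,dt.
\end{equation*}
Bounding the right-hand side by H\"{o}lder's inequality in $t$ with exponents $p'$ and $p$ and invoking the Poincar\'{e}--Wirtinger inequality \eqref{Poincare_00} with $q=2$ in the form $|x_\epsilon-\overline{x_\epsilon}|_{\ell^2}\le(C_{G,p}\,p\,\varphi_{G,p}(x_\epsilon))^{1/p}$, one gets $p\int_0^T\varphi_{G,p}(x_\epsilon)\,dt\le a\bigl(\int_0^T\varphi_{G,p}(x_\epsilon)\,dt\bigr)^{1/p}$ with $a$ depending only on $C_{G,p},p$ and $\|h\|_{L^{p'}(0,T;\R^V)}$; since $1/p<1$ this forces $\int_0^T\varphi_{G,p}(x_\epsilon)\,dt$ and $\epsilon\int_0^T|x_\epsilon-\overline{x_\epsilon}|_{\ell^2}^2\,dt$ to be bounded uniformly in $\epsilon$. (This is the step that dictates the hypothesis $h\in L^{\bar{p'}}$, because $\bar{p'}=\max\{2,p'\}\ge p'$.) (c) Testing $\text{(P)}_{\epsilon}$ against $x_\epsilon'$ and integrating, the exact derivatives $\tfrac{\epsilon}{2}\tfrac{d}{dt}|x_\epsilon|_{\ell^2}^2$ and $\tfrac{d}{dt}\varphi_{G,p}(x_\epsilon)$ integrate to zero by periodicity, so $\|x_\epsilon'\|_{L^2(0,T;\R^V)}\le\|h\|_{L^2(0,T;\R^V)}$ (here $h\in L^2$, which holds since $\bar{p'}\ge2$). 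Combining (a)--(c): $x_\epsilon$ is equi-H\"{o}lder-$\tfrac12$ on $[0,T]$, and the bound on $\int_0^T\varphi_{G,p}(x_\epsilon)$ plus \eqref{Poincare_00} produces a time $t_\epsilon$ with $|x_\epsilon(t_\epsilon)-\overline{x_\epsilon}(t_\epsilon)|_{\ell^2}$ bounded; with (a) this makes $\{x_\epsilon\}$ bounded in $W^{1,2}(0,T;\R^V)\cap C([0,T];\R^V)$.

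Finally, passing to the limit: by the Arzel\`{a}--Ascoli theorem and weak compactness, along a subsequence $\epsilon_k\to0$ one has $x_{\epsilon_k}\to x$ uniformly on $[0,T]$, $x_{\epsilon_k}'\rightharpoonup x'$ weakly in $L^2(0,T;\R^V)$, and $\epsilon_k x_{\epsilon_k}\to0$ in $L^2(0,T;\R^V)$; the constraint $x(0)=x(T)$ survives the uniform limit. Put $y_{\epsilon_k}:=h-x_{\epsilon_k}'-\epsilon_k x_{\epsilon_k}\in L_{G,p}(x_{\epsilon_k}(t))$ a.e., so $y_{\epsilon_k}\rightharpoonup y:=h-x'$ weakly in $L^2(0,T;\R^V)$. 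To identify $y(t)\in L_{G,p}(x(t))$ a.e. I would use Minty's monotonicity argument for the (maximal monotone) realization of $L_{G,p}$ on $L^2(0,T;\R^V)$: for every $\xi\in L^2(0,T;\R^V)$ with $\eta(t)\in L_{G,p}(\xi(t))$ a.e., $\int_0^T(y_{\epsilon_k}-\eta)\cdot(x_{\epsilon_k}-\xi)\,dt\ge0$; and $\int_0^T y_{\epsilon_k}\cdot x_{\epsilon_k}\,dt=\int_0^T h\cdot x_{\epsilon_k}\,dt-\epsilon_k\int_0^T|x_{\epsilon_k}|_{\ell^2}^2\,dt\to\int_0^T h\cdot x\,dt=\int_0^T y\cdot x\,dt$, where the two cancellations $\int_0^T x_{\epsilon_k}'\cdot x_{\epsilon_k}\,dt=0$ and $\int_0^T x'\cdot x\,dt=0$ are exactly the periodicity. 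Letting $k\to\infty$ and then varying $(\xi,\eta)$ gives $y(t)\in L_{G,p}(x(t))$ a.e., i.e.\ $x$ solves (P). The main obstacle is precisely the $\epsilon$-uniform a priori estimate of Step (b): with coercivity absent, the bound closes only through the combined use of the compatibility condition \eqref{A-Per} (to control the ``mean'' part $\overline{x_\epsilon}$), the Poincar\'{e}--Wirtinger inequality (to control the oscillatory part $x_\epsilon-\overline{x_\epsilon}$), and the sharp integrability $h\in L^{\bar{p'}}$ forced by the H\"{o}lder exponents; the homogeneous borderline case $p=1$ needs extra care in this step.
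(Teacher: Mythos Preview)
Your proposal is correct and follows essentially the same scheme as the paper: regularize by adding $\epsilon\,\id$, obtain the periodic solution of $\text{(P)}_\epsilon$, derive $\epsilon$-uniform bounds by first controlling $\overline{x_\epsilon}$ via the mean equation and \eqref{A-Per}, then closing an estimate on $\int_0^T\varphi_{G,p}(x_\epsilon)\,dt$ through the Poincar\'{e}--Wirtinger inequality and H\"{o}lder with exponents $(p',p)$, then bounding $\|x_\epsilon'\|_{L^2}$ by testing with $x_\epsilon'$, and finally passing to the limit via Arzel\`{a}--Ascoli. The only notable difference is in the limit passage: since $x_{\epsilon_k}\to x$ \emph{strongly} in $C([0,T];\R^V)$ and $y_{\epsilon_k}\rightharpoonup y$ weakly in $L^2$, the paper simply invokes the demiclosedness of the maximal monotone realization of $L_{G,p}$ on $L^2(0,T;\R^V)$, which is shorter than your Minty argument (the latter would be needed only if $x_{\epsilon_k}$ converged merely weakly).
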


\begin{proof}
We first deal with the following approximation problem:
\begin{equation*}
\text{(P)}_{\varepsilon }
\begin{cases}
~~\DS  \frac{d}{dt} x _{\varepsilon }(t)
		+ \varepsilon x_{\varepsilon } (t)+ L_{G ,p }(x _{\varepsilon }(t) ) \ni h (t)  ~~~&~~t \in (0,T), \\
~~x _{\varepsilon } (0) = x _{\varepsilon }(T) . &
\end{cases}
\end{equation*}
Remark that the main term of (P)$_{\varepsilon }$ coincides with the subdifferential of
\begin{equation*}
\varphi ^{\varepsilon }_{G , p } (x) :=
\frac{\varepsilon }{2} |x | ^2_{\ell ^2} + \varphi _{G , p } (x) ,
\end{equation*}
i.e.,  $\partial \varphi ^{\varepsilon }_{G , p } (x) = \varepsilon x + L_{G ,p } (x)$.
Since $\varphi ^{\varepsilon }_{G , p } $ is coercive,
 (P)$_{\varepsilon }$ possesses a unique periodic solution $x _{\varepsilon }$
$x _{\varepsilon } \in W^{1,2} (0,T ; \R ^V)$
for any given $h \in L^2 (0,T ;\R ^V )$ (see Corollary 3.4 of \cite{Bre}).

Testing  (P)$_{\varepsilon }$ by $1 _{S _k}$ ($k=1,\ldots ,l$) and integrating over $[0, T]$,
we get 
\begin{equation*}
\sum_{v \in S_k } \int_{0}^{T} x_{\varepsilon }(t) (v) dt=0  
\end{equation*}
by the condition $x _{\varepsilon } (0) =  x _{\varepsilon } (T)  $ and \eqref{A-Per}.
Then the continuity of $x _{\varepsilon } $ implies that
there exists some $t_ k \in [ 0,T ]$ such that $\sum_{v \in S_k } x_{\varepsilon }(t_k) (v) =0  $.
Multiplying (P)$_{\varepsilon }$ by $1 _{S _k}$ again,
we obtain 
\begin{equation*}
\frac{d}{dt }  \LC \sum_{v \in S_k }  x_{\varepsilon }(t) (v)  \RC +
  \varepsilon \LC \sum_{v \in S_k } x_{\varepsilon }(t) (v)  \RC =    
  \LC \sum_{v \in S_k } h (t) (v) \RC ,
\end{equation*}
which leads to 
\begin{equation*}
  \LC \sum_{v \in S_k }  x_{\varepsilon }(t) (v) \RC =    
\int_{t_ k}^{t } e ^{ -\varepsilon (t -s)}
  \LC \sum_{v \in S_k } h (s) (v) \RC ds 
\end{equation*}
and 
\begin{equation}
| \overline{ x_{\varepsilon } } (t) | _{\ell ^q} \leq 
| \overline{ x_{\varepsilon } } (t) | _{\ell ^1} \leq 
\int_{0}^{T} | \overline{ h } (s) | _{\ell ^1} ds
\label{Per-01} 
\end{equation}
for any $q \in [1,\infty ]$ and $t \in [0 ,  T]$.

Multiplying (P)$_{\varepsilon }$ by $x_{\varepsilon }$ and integrating over $[0, T]$,
we have
\begin{align*}
& \varepsilon \int_{0}^{T} |x_{\varepsilon } (t) | ^2_{\ell ^2} dt 
+ p\int_{0}^{T} \varphi  _{G ,p } ( x_{\varepsilon } (t) ) dt \\
& \leq 
\int_{0}^{T} |h(t)| _{\ell ^2}  |x_{\varepsilon }(t)| _{\ell ^2}  dt \\
& \leq 
\LC \int_{0}^{T} |h(t)| ^{p' } _{\ell ^2}  dt \RC ^{1 /p'}
	\LB 
	\LC \int_{0}^{T}  |x_{\varepsilon }(t) - \overline{x_{\varepsilon }}  (t) | ^p _{\ell ^2}  dt \RC ^{1 /p} 
	+ 
	\LC \int_{0}^{T}  | \overline{x_{\varepsilon }}  (t) | ^p _{\ell ^2}  dt \RC ^{1 /p} 
\RB .
\end{align*}
From this estimate together with  \eqref{Poincare_00} and \eqref{Per-01},
we can derive 
\begin{equation}
 \varepsilon \int_{0}^{T} |x_{\varepsilon } (t) | ^2_{\ell ^2} dt 
+ \int_{0}^{T} |x_{\varepsilon } (t) | ^p_{\ell ^2} dt \leq C,
\label{Per-02} 
\end{equation}
where $C$ is some general constant independent of $\varepsilon  \in ( 0, 1] $.
Let $t _ 0 \in [0,T ]$ attain the minimum of $t \mapsto  |x_{\varepsilon } (t) | _{\ell ^2} $.  
Clearly $ |x_{\varepsilon } (t_ 0 ) | _{\ell ^2}  \leq C $ by \eqref{Per-02}.

Testing (P)$_{\varepsilon }$ by $x'_{\varepsilon } $,
we get
\begin{equation}
\int_{0}^{T} \left | x'_{\varepsilon } (t) \right | ^2_{\ell ^2} dt 
\leq 
\int_{0}^{T}  | h(t)  | ^2_{\ell ^2} dt .
\label{Per-03} 
\end{equation}
This immediately yields
\begin{equation}
\sup _{0\leq t \leq T}  |  x_{\varepsilon } (t)  | _{\ell ^2}  
\leq  C
\label{Per-04} 
\end{equation}
by  $ |x_{\varepsilon } (t_ 0 ) | _{\ell ^2}  \leq C $ and 
\begin{equation}
\int_{0}^{T}  | y_ \varepsilon (t) | ^2_{\ell ^2} dt 
\leq C ,
\label{Per-05} 
\end{equation}
where
 $y_ \varepsilon $ is the section of $ L _{G,p } (x_{\varepsilon })$ satisfying (P)$_{\varepsilon }$,
i.e., $x'_{\varepsilon } (t) + \varepsilon  x_{\varepsilon } (t) + y_{\varepsilon } (t) =h(t)$
and $ y_{\varepsilon } (t) \in L _{G,p } (x_{\varepsilon } (t))$
for a.e. $t \in (0,T )$.

By \eqref{Per-03} and \eqref{Per-04},
we can apply Ascoli-Arzela's theorem
and extract a subsequence (we omit relabeling) which strongly converges in $C([0, T ]; \R ^V)$.
Let $x$ be its limit, which evidently fulfills the periodic condition.
Then 
\eqref{Per-02} yields 
\begin{equation*}
| \varepsilon  x_{\varepsilon }  | _{L^2 (0,T ; \R ^V )} \leq  \sqrt{\varepsilon} C \to 0
\end{equation*}
and \eqref{Per-03} leads to 
\begin{equation*}
x'_{\varepsilon } \rightharpoonup x' ~~~\text{  weakly in } L^2(0,T ; \R ^V). 
\end{equation*}
Moreover, \eqref{Per-05} implies that
$\{ y_{\varepsilon }\} _{\varepsilon >0}$
also possesses a subsequence which weakly converges in  $L^2 (0, T ; \R ^V)$.
Thanks to the maximal monotonicity of $L_{G ,p}$,
its limit $y \in L^2 (0, T ; \R ^V)$ satisfies $y \in L_{G , p} (x )$,
whence it follows Theorem \ref{Per-Pr}
\end{proof}

Although the difference of two solutions can be hardly estimated
(see Remark \ref{Rem2-3-1}),
we can show  the uniqueness of periodic solution by virtue of Theorem 5 in \cite{H}:    
\begin{theorem}
\label{Uni} 
Let $x_ 1 , x_2 $ be two solutions to (P) with the same given $h $,
then there exists some constant $\gamma  \in \R ^V  $ such that $x _1 = x_2  + \gamma  $.  
\end{theorem}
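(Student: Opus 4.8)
The plan is to exploit the subdifferential structure of $L_{G,p}$. The monotonicity of $L_{G,p}$ together with the periodic boundary condition will only give that $t\mapsto|x_1(t)-x_2(t)|_{\ell^2}$ is constant — which for a general maximal monotone operator is all one can say (a skew‑symmetric part would produce genuinely rotating periodic orbits) — so the real point is to use $L_{G,p}=\partial\varphi_{G,p}$ to upgrade this to the difference itself being independent of $t$. This is precisely the abstract content of Theorem~5 in \cite{H}, and I will reprove it directly in the present setting.

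Set $v:=x_1-x_2$ and let $y_i$ be the selection realizing (P), i.e.\ $y_i(t)\in L_{G,p}(x_i(t))$ and $x_i'(t)+y_i(t)=h(t)$ for a.e.\ $t$; since $x_i\in W^{1,2}(0,T;\R^V)$ by Theorem~\ref{Per-Pr} and $h\in L^{\bar{p'}}(0,T;\R^V)\subset L^2(0,T;\R^V)$ on the bounded interval $[0,T]$, we have $y_i=h-x_i'\in L^2(0,T;\R^V)$. Monotonicity of $L_{G,p}$ gives, for a.e.\ $t$,
\[
\tfrac12\tfrac{d}{dt}|v(t)|_{\ell^2}^2=(x_1'(t)-x_2'(t))\cdot v(t)=-(y_1(t)-y_2(t))\cdot v(t)\le 0,
\]
while $x_i(0)=x_i(T)$ forces $v(0)=v(T)$, so $\int_0^T\frac{d}{dt}|v(t)|_{\ell^2}^2\,dt=0$; hence the integrand vanishes a.e., i.e.\ $(y_1(t)-y_2(t))\cdot v(t)=0$ for a.e.\ $t$, and $t\mapsto|v(t)|_{\ell^2}$ is constant. (By \eqref{differ} this already forces $f_e(x_1(t))=f_e(x_2(t))$ for every $e\in E$ and a.e.\ $t$ when $p>1$, but Remark~\ref{Rem2-3-1} shows such information is far from sufficient on its own.)

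The decisive step uses the convexity of $\varphi_{G,p}$. Since $y_i(t)\in\partial\varphi_{G,p}(x_i(t))$, we have $\varphi_{G,p}(\xi)\ge\varphi_{G,p}(x_i(t))+y_i(t)\cdot(\xi-x_i(t))$ for all $\xi\in\R^V$; taking $\xi=x_2(t)$ for $i=1$ and $\xi=x_1(t)$ for $i=2$ and adding, the cross terms sum to $-(y_1(t)-y_2(t))\cdot v(t)=0$, so both subgradient inequalities must hold with equality, whence $y_1(t),y_2(t)\in\partial\varphi_{G,p}(x_1(t))\cap\partial\varphi_{G,p}(x_2(t))=L_{G,p}(x_1(t))\cap L_{G,p}(x_2(t))$ for a.e.\ $t$. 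Now invoke the chain rule for convex functionals along $W^{1,2}$ curves (see, e.g., \cite{Bre}): the function $t\mapsto\varphi_{G,p}(x_i(t))$ is absolutely continuous and its a.e.\ derivative equals $\eta(t)\cdot x_i'(t)$ for every $L^2$-selection $\eta$ with $\eta(t)\in L_{G,p}(x_i(t))$ a.e.; since the left-hand side does not depend on $\eta$, applying this along $x_1$ with the two admissible selections $y_1,y_2$ gives $(y_1(t)-y_2(t))\cdot x_1'(t)=0$ a.e., and along $x_2$ it gives $(y_1(t)-y_2(t))\cdot x_2'(t)=0$ a.e. Subtracting these and using $x_1'(t)-x_2'(t)=y_2(t)-y_1(t)$, we obtain $|y_1(t)-y_2(t)|_{\ell^2}^2=0$ a.e., so $y_1=y_2$, hence $x_1'=x_2'$, and therefore $v\equiv\gamma$ with $\gamma:=x_1(0)-x_2(0)\in\R^V$.

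I expect the genuine obstacle to be exactly this upgrade from ``$|x_1-x_2|_{\ell^2}$ constant'' to ``$x_1-x_2$ constant'': the two ingredients that make it work — the equality case of the subgradient inequality (which yields $y_1(t),y_2(t)\in L_{G,p}(x_1(t))\cap L_{G,p}(x_2(t))$) and the fact that $\frac{d}{dt}\varphi_{G,p}(x(t))=\eta(t)\cdot x'(t)$ holds for \emph{every} admissible selection $\eta$, not merely the minimal one — are precisely what Theorem~5 of \cite{H} packages abstractly. The only routine point to record along the way is that the hypothesis $h\in L^{\bar{p'}}$ guarantees $y_1,y_2\in L^2(0,T;\R^V)$, which is what makes the chain rule applicable; note also that $\gamma$ need not be a $0$-eigenfunction of $L_{G,p}$, so the statement cannot be sharpened to $x_1=x_2+\phi_{c_1,\dots,c_l}$ in general.
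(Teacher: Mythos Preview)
Your argument is correct and is exactly the route the paper takes: the paper simply invokes Theorem~5 of \cite{H}, and you have faithfully reproduced that theorem's proof in the present finite-dimensional setting. The two key ingredients you identify --- the equality case of the subgradient inequality yielding $y_1(t),y_2(t)\in\partial\varphi_{G,p}(x_1(t))\cap\partial\varphi_{G,p}(x_2(t))$, and the chain rule $\tfrac{d}{dt}\varphi_{G,p}(x_i(t))=\eta(t)\cdot x_i'(t)$ valid for \emph{every} admissible selection $\eta$ --- are precisely Haraux's mechanism, so there is no substantive difference to report.
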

\begin{remark}
We can easily see that  if $u$ is a solution to (P)
then 
$u + \phi _{c_1 ,c_2 ,\ldots ,c_k}$ also satisfies (P) for any $c_1 ,c_2 ,\ldots ,c_k \in \R $
and  obtain $f _e (x_1 (t)) =  f _e (x_2 (t)) $ by \eqref{differ}.
Still,  $\gamma  = \phi _{c_1 ,c_2 ,\ldots ,c_k}$ dose  not necessarily hold in Theorem \ref{Uni}.
For instance,
let 
$\alpha >0 $, $ \beta \geq  0 $ and 
\begin{align*}
h (t) =
\begin{pmatrix}
2 \alpha \exp \LC 2 (t-\frac{T}{2} ) \RC  + 2 \beta \\
0 \\
0 \\
- 2 \alpha  \exp \LC 2 (t-\frac{T}{2} ) \RC   - 2 \beta 
\end{pmatrix},
\end{align*}
then 
\begin{align*}
x (t) =
\begin{pmatrix}
 \alpha \cosh \LC 2 (t-\frac{T}{2} ) \RC +\beta \\
a \\
b \\
- \alpha \cosh \LC 2 (t-\frac{T}{2} ) \RC - \beta 
\end{pmatrix}
\end{align*}
becomes  a periodic solution to (P) with
${\#}V = 4$, $E = \{ V \}$, $w \equiv 1$, and $p=2$ 
for arbitrary fixed $ a , b \in  (- \alpha - \beta ,  \alpha + \beta )$. 
\end{remark}

\section*{Conclusion}

In this article, we study the hypergraph $p$-Laplacian
from the viewpoint of nonlinear analysis
and find the lack of coerciveness and the Poincar\'{e}--Wirtinger type inequality
for this operator.
We can see some validity of these tools 
in the treating Cauchy problem and time-periodic problem of 
the evolution equation governed by the  hypergraph $p$-Laplacian.

Interestingly, 
the multiplicity of $L_{G, p }$ implies that 
the ODE describes the diffusion of ``heat'' from the vertex with  maximum
to minimum belonging to the same hyperedge
and 
the vertices with middle value 
halt until the maximum or minimum touches.
This property might suggest a new PDE model describing competition of two groups.
Namely, we can expect hypergraph Laplacian in reaction-diffusion system
describes the effect of aid/replenishment/assistance
to injured/suffering members (vertices) from others in each group (hyperedge).


\section*{Acknowledgment}
M. Ikeda is supported by JST CREST Grant Number JPMJCR1913, 
Japan and Grant-in-Aid for Young Scientists Research (No.19K14581), Japan Society for the Promotion of Science.
S. Uchida is supported by the Fund for the Promotion of Joint International Research 
(Fostering Joint International Research (B)) (No.18KK0073), Japan Society for the Promotion of Science.

%
\address{
Masahiro Ikeda\\
Department of Mathematics, \\
Faculty of Science and Technology, \\   
Keio University, \\
3-14-1 Hiyoshi Kohoku-ku, Yokohama, \\
223-8522, JAPAN/\\
Center for Advanced Intelligence \\
Project, RIKEN, Tokyo, \\
103-0027, JAPAN.
}
{masahiro.ikeda@keio.jp/\\
masahiro.ikeda@riken.jp}
%
%
\address{
Shun Uchida\\
Department of Integrated Science and Technology, \\
Faculty of Science and Technology, \\ 
Oita University,\\
700 Dannoharu, Oita City, Oita Pref., \\
 870-1192, JAPAN.
}
{shunuchida@oita-u.ac.jp}
\end{document}